\newtheorem{theorem}{Theorem}
\newtheorem{proposition}[theorem]{Proposition}
\newtheorem{corollary}[theorem]{Corollary}
\newtheorem{lemma}[theorem]{Lemma}
\newtheorem{remark}[theorem]{Remarks}
\newenvironment{proof}{\noindent{\bf Proof.}\ }
{$\bullet$\medskip\par}
\def\cX{{\mathcal X}}
\def\Ha{\textrm{H}}
\def\({\left(}
\def\){\right)}
\def\<{\langle}
\def\>{\rangle}
\title{\bf  Second moment of Dirichlet $L$-functions, \\
character sums over subgroups, \\ and upper bounds on relative class numbers.}
\author{
St\'ephane R. LOUBOUTIN\\
Aix Marseille Universit\'e, CNRS, Centrale Marseille, I2M,\\ 
Marseille, FRANCE\\
stephane.louboutin@univ-amu.fr\\
Marc MUNSCH\\
5010 Institut f\"{u}r Analysis und Zahlentheorie\\
8010 Graz, Steyrergasse 30, Graz, AUSTRIA\\
munsch@math.tugraz.at}
\date{\today}
\begin{document}
\bibliographystyle{alpha}
\maketitle

\footnotetext{
2020 Mathematics Subject Classification. 
Primary. 11R42, 11L40. Secondary: 11A07, 11J71, 11M20, 11R18, 11R20.
Key words and phrases: moments of Dirichlet $L$-functions, 
 cyclotomic field, 
relative class number, character sums, multiplicative subgroups,
exponential sums,
Dedekind sums, 
discrepancy.}

\begin{abstract}
We prove an asymptotic formula for the mean-square average of $L$- functions associated to subgroups of characters of sufficiently large size. Our proof relies on the study of certain character sums ${\cal A}(p,d)$ recently introduced by E. Elma. We obtain an asymptotic formula for ${\cal A}(p,d)$ which holds true for any divisor $d$ of $p-1$ removing previous restrictions on the size of $d$. This anwers a question raised in Elma's paper. Our proof relies both on estimates on the frequency of large character sums and techniques from the theory of uniform distribution. As an application we deduce the following bound $h_{p,d}^-
\leq 2\left (\frac{(1+o(1))p}{24}\right )^{m/4}$ on the relative class numbers 
of the imaginary number fields of conductor $p\equiv 1\mod d$ and degree $m=(p-1)/d$. 

\end{abstract}

\section{\bf Introduction}

Throughout the paper $d\geq 1$ will be an odd integer 
and $p$ will be an odd prime satisfying $p\equiv 1\pmod {2d}$. We also write $\log_j$ for the j-th iterated logarithm. We let $\cX_p$ denote the multiplicative cyclic group of order $p-1$ of the Dirichlet characters mod $p$ 
and let $\cX_p^*$ denote the set with $p-2$ elements of the non-trivial Dirichlet characters mod $p$.
 We set $m =(p-1)/d$, an even integer, 
and $\chi_{p,m}$ will denote any one of the $\phi (m)$ odd Dirichlet characters in $\cX_p$ of order $m$. \\

Let $h_{p,d}^-$ be the relative class number of 
the imaginary subfield $K_{p,d}$ of the cyclotomic field ${\mathbb Q}(\zeta_p)$ 
of even degree $(K_{p,d}:{\mathbb Q})=m$ 
and odd relative degree $({\mathbb Q}(\zeta_p):K_{p,d}) =d$ 
(e.g. see \cite[Chapter 4]{Was}). For $d=1$, we have $K_{p,1} ={\mathbb Q}(\zeta_p)$ 
and it has long been known that 
\begin{equation}\label{bounp1}
h_{p,1}^-
=h_{{\mathbb Q}(\zeta_p)}^-
\leq 2p\left (\frac{p}{24}\right )^{(p-1)/4}
=2p\left (\frac{p}{24}\right )^{m/4},
\end{equation}
see \cite{Met} and \cite{Wal}. 
In \cite{{LouCMB36/37}} it is explained how to improve upon this bound 
by taking values greater than $24$ for this denominator, in fact values as close to $4\pi^2$ as desired. 
See also \cite{Gra} for more subtle results.\\

Denote by $w_{p,d}$ the number of complex roots of unity contained in $K_{p,d}$, we have $w_{p,1} =2p$ and $w_{p,d}=2$ for $d>1$. The following bound holds:
\begin{equation}\label{formulahrel}
h_{p,d}^-
=w_{p,d}\prod_{j=1}^{m/2}
\frac{\sqrt p}{2\pi}L(1,\chi_{p,m}^{2j-1})
\leq w_{p,d}\left (\frac{pM(p,m)}{4\pi^2}\right )^{m/4},
\end{equation}
where $M(p,m)$ denotes the following mean square of $L(1,\chi)$:
\begin{equation}\label{defMpm}
M(p,m)
:=\frac{2}{m}\sum_{j=1}^{m/2}\vert L(1,\chi_{p,m}^{2j-1})\vert^2.
\end{equation} Therefore explicit formulas (or asymptotic formulas) for $M(p,m)$ allow to give precise upper bounds of type $h_{p,d}^-\leq  C_1\cdot C_2^{m/4}$. For $d=1$, H. Walum deduced (\ref{bounp1}) in \cite{Wal} by proving that 
\begin{equation}\label{Mp1}
M(p,p-1) 
=\frac{\pi^2}{6}\left (1-\frac{1}{p}\right )\left (1-\frac{2}{p}\right ).
\end{equation}  For $d=3,5$, some explicit formulas for $M(p,m)$ have been obtained in certain cases by the first author (see Section \ref{explicit}) allowing him to give upper bounds on $h_{p,3}^-$ and $h_{p,5}^-$. In contrast, for a given even $m$, 
as $p$ runs over the prime integers $p\equiv 1+m\mod{2m}$
nothing better than $h_{p,(p-1)/m}^- \ll_m (p\log^2 p)^{m/4}$ is known, 
by using (\ref{formulahrel}) and the bound $\vert L(1,\chi)\vert\ll\log p$. To begin with, the following simple argument gives a trivial bound on $M(p,m)$. Since in $M(p,m)$ we consider only $m/2$ of the $(p-1)/2$ odd Dirichlet characters that appear in $M(p,p-1)$,  
we have 
\begin{eqnarray}\label{trivialMp}
M(p,m)
&\leq\frac{2}{m}\frac{p-1}{2}M(p,p-1)
=dM(p,p-1)
=d\frac{\pi^2}{6}\left (1-\frac{1}{p}\right )\left (1-\frac{2}{p}\right ).
\end{eqnarray} By (\ref{formulahrel}) it implies   
\begin{equation}\label{trivial} h_{p,d}^-
\leq 2\left (\frac{dp}{24}\right )^{\frac{p-1}{4d}}
=2\left (\frac{dp}{24}\right )^{m/4}.\end{equation} 
 The aim of the paper is to give an asymptotic formula for $M(p,m)$ when $m$ is of reasonable size with respect to $p$ and an upper bound when $m$ is small. As a consequence we obtain a significant improvement upon the trivial bound \eqref{trivial}.

\noindent\frame{\vbox{
\begin{theorem}\label{mainth1}
As $p$ tends to infinity and $d\geq 1$ runs over the odd divisors of $p-1$ 
such that $d \leq (1/2-\varepsilon)\frac{\log p}{\log_2 p}$, we have the asymptotic formula
\begin{equation}\label{asympmsq}
M(p,m)=M(p,(p-1)/d)
=\frac{\pi^2}{6}\left(1+ O(d(\log p)^2 p^{-\frac{1}{d-1}})\right),
\end{equation}
which implies the upper bound 
\begin{equation}\label{nonexplicit}
h_{p,d}^-
\leq 2\left (\frac{(1+o(1))p}{24}\right )^{m/4}.
\end{equation} If the previous bound does not apply but $d$ is such that $\log d= o(\log p/\log_2 p)$, we have for some absolute constant $C> 0$
\begin{equation}\label{boundMpmlarge}
M(p,m)=M(p,(p-1)/d) \leq C (\log_2 d)^2
\end{equation}
which implies, in this range of $d$, the upper bound
\begin{equation}\label{boundhplarge}
h_{p,d}^-
\leq (Cp(\log_2 d)^2)^{m/4}.
\end{equation} 

\end{theorem}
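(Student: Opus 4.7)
Given the bound (\ref{formulahrel}), the class number statements (\ref{nonexplicit}) and (\ref{boundhplarge}) follow immediately from the respective mean-square estimates (\ref{asympmsq}) and (\ref{boundMpmlarge}), so my plan is to focus on $M(p,m)$. The bridge to Elma's character sums $\mathcal{A}(p,d)$ would be set up as follows: let $H\subset\mathbb{F}_p^*$ be the unique subgroup of order $d$ (equivalently, the common kernel of the characters in $\langle\chi_{p,m}\rangle$). Starting from a finite Bernoulli-polynomial-type expansion of $L(1,\chi)$ for odd $\chi$ mod $p$, and using the orthogonality $\sum_{\chi\,\mathrm{odd},\,\chi^m=1}\chi(a)\bar\chi(b) = \tfrac{m}{2}(\mathbf{1}_{ab^{-1}\in H} - \mathbf{1}_{ab^{-1}\in -H})$, I would collapse $\tfrac{2}{m}\sum_{\chi}|L(1,\chi)|^2$ into a sum indexed by $H$ which, up to an elementary piece, is precisely $\mathcal{A}(p,d)$. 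It then suffices to analyze this character sum.

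For the first regime, I would isolate the trivial element $h=1$ in $\mathcal{A}(p,d)$ to produce the clean main term $\pi^2/6$; the remaining $d-1$ terms form the error. Their control rests on two complementary inputs. First, an elementary lower bound on nontrivial subgroup elements: if $h\in[1,p-1]$ satisfies $h^d\equiv 1\pmod p$ and $h\ne 1$, then $h^d\ge p+1$, so $h$ is bounded below by a positive power of $p$, handling the ``small'' scales. Second, a quantitative equidistribution statement for $H$ in $\mathbb{F}_p$, obtained by combining Weil-type bounds on the exponential sums $\sum_{h\in H} e^{2\pi i a h/p}$ with a Koksma-type discrepancy inequality, which handles ``medium'' and ``large'' scales. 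A dyadic decomposition together with partial summation against the $L$-series weight would stitch these estimates into the stated error $O(d(\log p)^2 p^{-1/(d-1)})$ throughout the range $d\le (1/2-\varepsilon)\log p/\log_2 p$.

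For the second regime $\log d = o(\log p/\log_2 p)$, the clean main term is no longer extractable, so I would instead bound $M(p,m)$ directly by controlling the frequency of large values of $|L(1,\chi)|^2$ on the family $\{\chi_{p,m}^{2j-1}\}$, in the spirit of Granville-Soundararajan on the distribution of $L(1,\chi)$. The typical contribution is $O((\log_2 d)^2)$, while the rare heavy characters are too sparse to spoil the bound. The principal difficulty throughout is the sharp error for $\mathcal{A}(p,d)$ uniform in $d$ up to nearly $\log p/\log_2 p$: Elma's treatment required tighter restrictions on $d$, and lifting them demands that the elementary subgroup lower bound and the discrepancy input reinforce each other with essentially no slack, precisely at the boundary between ``small'' and ``medium'' subgroup elements where each estimate is individually nearly tight.
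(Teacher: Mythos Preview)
Your overall architecture---reduce $M(p,m)$ to $\mathcal{A}(p,d)$ via (\ref{MA}), then analyze $\mathcal{A}(p,d)$ separately in the two regimes---matches the paper. The large-$d$ half is fine: the paper bounds the non-principal contribution to $\mathcal{A}(p,d)$ through the frequency of large partial character sums $M(\chi)$ (Theorem~\ref{frequency}, \cite{bober2018frequency}) and then plugs into (\ref{MA}), but its concluding remarks explicitly note that working directly with the distribution of $L(1,\chi)$ \`a la Granville--Soundararajan, as you propose, would yield (\ref{boundMpmlarge}) just as well. So there you have a legitimate alternative route.

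The small-$d$ half has a real gap. Your plan relies on ``Weil-type bounds on $\sum_{h\in H} e^{2\pi i a h/p}$'' together with a Koksma inequality. But in the regime $d\le (1/2-\varepsilon)\log p/\log_2 p$ the subgroup $H$ is tiny, and no nontrivial bound on such Gauss sums over $H$ is available: Weil gives $\sqrt{p}\gg d$, and sum-product methods (Bourgain--Glibichuk--Konyagin and descendants) require $d>p^\delta$. So this ingredient is vacuous precisely where you need it. Separately, your elementary bound ``$h^d\ge p+1$'' only gives $h>p^{1/d}$, whereas the stated error $p^{-1/(d-1)}$ needs the exponent $1/(d-1)$; this is not cosmetic, since Proposition~\ref{propdedekind} shows that exponent is optimal.

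The paper replaces both of these with a single algebraic input. For each fixed $\theta\in H$ of exact order $k$, one studies the two-dimensional set $S_\theta=\{(x/p,\theta x/p):x\bmod p\}$ via Erd\H{o}s--Tur\'an--Koksma; the exponential sums that arise are over $x\bmod p$ (pure orthogonality, no Weil needed), and the discrepancy reduces to the lattice quantity $\rho(\theta,p)=\min\{r(h_1,h_2):h_1+h_2\theta\equiv 0\pmod p,\ (h_1,h_2)\neq 0\}$. The decisive step is Lemma~\ref{minvalue}: taking the resultant of $h_1+h_2X$ with the cyclotomic polynomial $\Phi_k$ and applying Hadamard's inequality forces $\rho(\theta,p)\ge p^{1/\phi(k)}/\sqrt{8}$. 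For $d$ prime this is exactly $p^{1/(d-1)}$, and that is what produces the error in (\ref{asympmsq}). This resultant/cyclotomic argument is the missing idea in your proposal; neither the subgroup exponential sums nor the crude $h>p^{1/d}$ bound can substitute for it.
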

}}

\begin{remark} 
 It should be emphasized that the error term in \eqref{asympmsq} is almost optimal, in view of Proposition \ref{propdedekind}. Furthermore \eqref{nonexplicit}  is in accordance with the known asymptotics (see \cite[Theorem 4]{LouManuMath91})
$\log h_{p,d}^-\sim\frac{m+o(1)}{4}\log p.$ For very large $d$, the bound $M(p,m) \ll \log^2 p$ remains the best known. This is not surprising if we look at the very extreme case $d=(p-1)/2$ and $p\equiv 3\pmod 4$. 
In that situation, $\chi$ is the quadratic character given by the Legendre symbol $\chi (n) =\left (\frac{n}{p}\right )$ and $M(p,m)=\vert L(1,\chi)\vert^2$. Under GRH, Littlewood \cite{Littlewood} proved that $L(1,\chi) \ll \log_2 p$  but improving upon the bound $L(1,\chi) \ll \log p$ remains out of reach unconditionally. On the other way we cannot expect an uniform bound for $M(p,m)$ better than $(\log_2 p)^2$. Indeed, Chowla \cite{Chowla} proved unconditionally that there are infinitely many quadratic characters $\chi$ such that $L(1,\chi) \gg \log_2 p$. This supports the hypothesis that the bound \eqref{boundMpmlarge} could be sharp.
\end{remark}

The paper is organized as follows. To begin, in Section \ref{Elmasection}, 
we recall (and give a simple proof of) a formula discovered by Elma 
relating $M(p,m)$ to certain character sums ${\cal A}(p,d)$ defined below in \eqref{defApchi}.
In Section \ref{primespecial}, Proposition \ref{propdedekind}, 
we show that for a certain family of primes $p$ and $d$ 
we can compute exactly $M(p,m)$ using properties of Dedekind sums.
Finally, in Section \ref{sectionasymptotic}, we prove an asymptotic formula for ${\cal A}(p,d)$ which directly implies Theorem \ref{mainth1} (see Section $5$). A crucial point of the analysis comes from the fact that the average in \eqref{defMpm} is made over a family of $m/2$ characters which could be of any size with respect to $p$. The same difficulty carries into the analysis of  ${\cal A}(p,d)$ with a character sum averaged over a subgroup of $\mathbb{F}_{p}^*$ of size $d$. On one hand when $d$ is small (see Theorem \ref{asympdpetit}) we write ${\cal A}(p,d)$ as an average of a function evaluated at equidistributed points  modulo $1$ and use techniques from discrepancy theory. On the other hand, when $d$ is large (see Theorem \ref{asymptoticlarge}) we rely on character sums techniques and incorporate recent estimates on the frequency of large character sums \cite{bober2018frequency}.

\section{Elma's character sums}\label{Elmasection}
Let $\chi$ be an odd Dirichlet character of (even) order $m$ dividing $p-1$ and prime conductor $p\geq 3$. 
Set $d =(p-1)/m$ (an odd integer) and 
\begin{equation}\label{defApchi}
{\cal A}(p,d)
=\frac{1}{p-1}
\sum_{N=1}^{p-1}\left (\sum_{1 \leq n_1,n_2 \leq N \atop \chi(n_1)=\chi(n_2)} 1\right )
\end{equation}
(the results depends only on $p$ and $d$, not on the choice of $\chi$).

\subsection{Link with the mean square value $M(p,m)$}

E. Elma proved a nice connection between the mean square values $M(p,m)$'s and these character sums 
${\cal A}(p,d)$.
We give a simple and short proof of \cite[Theorem 1.1]{Elma}:

\noindent\frame{\vbox{
\begin{theorem}\label{thElma}
Let $\chi$ be a primitive Dirichlet modulo $f>2$, its conductor. 
Set $S(k,\chi) =\sum_{l=0}^k\chi (l)$ 
and let $L(s,\chi)=\sum_{n\geq 1}\chi (n)n^{-s}$ be its associated Dirichlet $L$-series. 
Then 
$$\sum_{k=1}^{f-1}\vert S(k,\chi)\vert^2 
=\frac{f^2}{12}\prod_{p\mid f}\left (1-\frac{1}{p^2}\right )
+a_\chi\frac{f^2}{\pi^2}\vert L(1,\chi)\vert^2, 
\hbox { where }
a_\chi
:=\begin{cases}
0&\hbox{if $\chi (-1)=+1$,}\\
1&\hbox{if $\chi (-1)=-1$.}
\end{cases}$$
\end{theorem}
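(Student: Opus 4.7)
The plan is to attack the identity by discrete Fourier analysis on $\mathbb{Z}/f\mathbb{Z}$ applied to the sequence $T(k):=S(k,\chi)$, extended $f$-periodically (and using $T(0)=T(f)=0$). Writing $T(k)=\sum_{a\bmod f}\widehat T(a)e^{2\pi iak/f}$ and invoking Parseval reduces the claim to the evaluation of $\sum_{a}|\widehat T(a)|^2$, split into the $a\neq 0$ frequencies and the zero mode.

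For $a\neq 0$, I would swap the order of summation between $k$ and the inner variable $l$ in $\widehat T(a)=\frac{1}{f}\sum_k T(k) e^{-2\pi iak/f}$, collapse the inner sum as a geometric series, and use the fact that for \emph{primitive} $\chi$ the twisted sum $\sum_{l=1}^{f-1}\chi(l)e^{-2\pi ial/f}$ equals $\chi(-1)\bar\chi(a)\tau(\chi)$ when $\gcd(a,f)=1$ and vanishes otherwise. Combined with $|\tau(\chi)|^2=f$ and $|1-e^{-2\pi ia/f}|^2=4\sin^2(\pi a/f)$, this produces
$$|\widehat T(a)|^2=\frac{\mathbf{1}_{\gcd(a,f)=1}}{4f\sin^2(\pi a/f)}.$$
For the zero frequency, $\widehat T(0)=-\frac{1}{f}\sum_{l=1}^{f-1}\chi(l)l$. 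If $\chi(-1)=+1$, the change of variable $l\mapsto f-l$ forces this sum to vanish, matching $a_\chi=0$. If $\chi(-1)=-1$, I would Fourier expand $\chi$ again, use $\sum_{l=0}^{f-1}l\,e^{2\pi ial/f}=f/(e^{2\pi ia/f}-1)$, decompose $\frac{1}{e^{2\pi ia/f}-1}=-\tfrac12-\tfrac{i}{2}\cot(\pi a/f)$, and invoke the classical cotangent formula $L(1,\chi)=\frac{\pi}{2f}\sum_{a=1}^{f-1}\chi(a)\cot(\pi a/f)$ (derivable from $L(1,\chi)=-f^{-1}\sum_a\chi(a)\psi(a/f)$ together with the reflection relation for the digamma function). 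The outcome is $\widehat T(0)=\frac{if\,L(1,\bar\chi)}{\pi\tau(\bar\chi)}$, giving $f|\widehat T(0)|^2=\frac{f^2}{\pi^2}|L(1,\chi)|^2$.

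To assemble the answer, I would establish the trigonometric identity
$$\sum_{\substack{1\leq a\leq f-1\\\gcd(a,f)=1}}\frac{1}{4\sin^2(\pi a/f)}=\frac{f^2}{12}\prod_{p\mid f}\!\left(1-\frac{1}{p^2}\right),$$
by Möbius inversion from the well-known $\sum_{a=1}^{f-1}\csc^2(\pi a/f)=(f^2-1)/3$, recognizing the right-hand side as the Jordan totient $J_2(f)/3$. Adding this to the zero-mode contribution and applying Parseval completes the proof. The main obstacle is the odd-character computation of $\widehat T(0)$: expressing $\sum\chi(l)l$ as a multiple of $L(1,\chi)/\tau(\bar\chi)$ requires the cotangent identity, which rests on either Hurwitz's functional equation or the digamma reflection formula; once that single ingredient is in place, everything else is formal Fourier manipulation.
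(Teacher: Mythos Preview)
Your proposal is correct and rests on the same Parseval idea as the paper, but you implement it with the \emph{discrete} Fourier transform on $\mathbb{Z}/f\mathbb{Z}$ applied to the sequence $T(k)=S(k,\chi)$, whereas the paper applies \emph{continuous} Parseval on $L^2[0,1]$ to the step function $F(x)=\sum_{0\le l\le fx}\chi(l)$. Consequently the nonzero-frequency contribution in the paper is $2\sum_{n\ge1,\,\gcd(n,f)=1}\frac{f}{4\pi^2 n^2}$, evaluated via $\zeta(2)$ with the coprimality factor, while in your version it is $\frac{1}{4}\sum_{\gcd(a,f)=1}\csc^2(\pi a/f)$, evaluated via the Jordan totient identity you sketch by M\"obius inversion. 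For the zero mode, both approaches land on $\widehat T(0)=L(0,\chi)=-\frac{1}{f}\sum_l l\chi(l)$; the paper then simply quotes the functional-equation consequence $|L(0,\chi)|^2=\frac{f}{\pi^2}|L(1,\chi)|^2$ for odd $\chi$, whereas you propose to re-derive this via the cotangent expression for $L(1,\chi)$. The paper's route is shorter here (one line instead of a Gauss-sum expansion plus the cotangent identity), but your discrete setup has the minor advantage of never leaving finite sums. Either way the argument is sound.
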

}}

\begin{proof}
Our simple proof is based on an easy to remember idea: 
we apply Parseval's formula 
$\int_0^1\vert F(x)\vert^2{\rm d}x
=\sum_{n=-\infty}^\infty\vert c_n(F)\vert^2$ 
to the function $x\in [0,1)\mapsto F(x):=\sum_{0\leq l\leq fx}\chi (l)$ extended to $x\in {\mathbb R}$ by $1$-periodicity.
The reader would be able to reconstruct the argument using this simple idea.
Let us now give all the details.
Since $\chi$ is primitive,
the Gauss sums 
$\tau(n,\chi)
=\sum_{k=1}^f\chi (k)\exp(2\pi ink/f)$ 
and $\tau(\chi) =\tau(1,\chi)$ satisfy $\tau (n,\chi) =\overline{\chi (n)}\tau(\chi)$ and $\vert\tau (\chi)\vert ^2=f$, 
e.g. see \cite[Lemmas 4.7 and 4.8]{Was}. 
(These properties are easy to check when $f=p\geq 3$ is prime).
Since $x\mapsto F(x)=S(k,\chi)$ is constant for $x\in [k/f,(k+1)/f)$, 
we have
$$\int_0^1\vert F(x)\vert^2{\rm d}x
=\frac{1}{f}\sum_{k=0}^{f-1}\vert S(k,\chi)\vert^2.$$
and the Fourier coefficients 
of $F$ are given by 
\begin{equation}\label{cnF}
c_n(F)
=\int_0^1F(x)\exp(-2\pi i nx){\rm d}x\\
=\sum_{k=0}^{f-1}S(k,\chi)\int_{k/f}^{(k+1)/f}\exp(-2\pi i nx){\rm d}x.
\end{equation}
Hence, by \cite[Theorem 4.2]{Was} we have 
$$c_0(f)
=\frac{1}{f}\sum_{k=0}^{f-1}S(k,\chi)
=\frac{1}{f}\sum_{k=0}^{f-1}\sum_{l=0}^k\chi (l)
=\frac{1}{f}\sum_{l=0}^{f-1}(f-l)\chi (l)
=-\frac{1}{f}\sum_{l=0}^{f-1}l\chi (l)
=L(0,\chi)$$
and for $n\neq 0$ we have 
\begin{eqnarray*}
c_n(F)
&=&\sum_{k=0}^{f-1}S(k,\chi)\frac{\exp\left (-\frac{2\pi in(k+1)}{f}\right )-\exp\left (-\frac{2\pi ink}{f}\right )}{-2\pi in}
\ \ \ \ \ \hbox{(by (\ref{cnF}))}\\
&=&\sum_{k=1}^{f-1}\frac{(S(k,\chi)-S(k-1,\chi))\exp\left (-\frac{2\pi ink}{f}\right )}{2\pi in}
\ \ \ \ \ \hbox{(notice that $S(0,\chi)=S(f-1,\chi)=0$)}\\
&=&\frac{\tau (-n,\chi)}{2\pi i n}
=\frac{\tau(\chi)}{2\pi i}\times\frac{\overline{\chi (-n)}}{n}
=-\chi (-1)c_{-n}(F).
\end{eqnarray*}
Now, $L(0,\chi)=0$ if $\chi (-1)=+1$ and $\vert L(0,\chi)\vert^2 =\frac{f}{\pi^2}\vert L(1,\chi)\vert^2$ if $\chi (-1)=-1$, 
e.g. see \cite[Chapter 4, page 30]{Was}. 
Therefore, Parseval's formula gives
$$\frac{1}{f}\sum_{k=0}^{f-1}\vert S(k,\chi)\vert^2
=a_\chi \frac{f}{\pi^2}\vert L(1,\chi)\vert^2
+2\sum_{n\geq 1\atop\gcd (n,f)=1}\frac{f}{4\pi^2}\times\frac{1}{n^2}
=a_\chi\frac{f}{\pi^2}\vert L(1,\chi)\vert^2
+\frac{f}{12}\prod_{p\mid f}\left (1-\frac{1}{p^2}\right )$$
and the desired result follows. 
Notice that this proof is similar to the ones in \cite{BC}.
\end{proof}

\noindent\frame{\vbox{
\begin{corollary}\label{corElma}
Let $\chi$ be an odd Dirichlet character of (even) order $m$ dividing $p-1$ and prime conductor $p\geq 3$. 
Set $d =(p-1)/m$ (an odd integer) and 
let ${\cal A}(p,d)$ be as in (\ref{defApchi}). 
Then 
\begin{equation}\label{MA}
M(p,m)
:=\frac{2}{m}\sum_{j=0\atop j\ odd}^{m-1}\vert L(1,\chi^{j})\vert^2
=\frac{\pi^2}{6}\frac{p-1}{p^2}
\left (12{\cal A}(p,d)
-(4d+1)p-d-1\right ).
\end{equation}
In particular, 
\begin{equation}\label{boundingApd}
\frac{(4d+1)p+d+1}{12}
\leq {\cal A}(p,d)
\leq\frac{(5d+1)p+d+1}{12}.
\end{equation} 
Moreover, by \cite{LouCRAS323}, we have
\begin{equation}\label{boundMpm}
0
\leq M(p,m)
\leq (\log p +2+\gamma-\log\pi)^2/4.
\end{equation}
\end{corollary}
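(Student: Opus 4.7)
The plan is to apply Theorem \ref{thElma} to each of the $m-1$ non-principal characters $\chi^j$ ($1 \leq j \leq m-1$) and then use character orthogonality to introduce $\mathcal{A}(p,d)$. Each such $\chi^j$ is non-principal modulo the prime $p$, hence primitive of conductor $p$, with parity $\chi^j(-1) = (-1)^j$ since $\chi$ is odd. Theorem \ref{thElma} with $f=p$ therefore yields
\[
\sum_{k=1}^{p-1}|S(k,\chi^j)|^2 = \frac{p^2-1}{12} + a_{\chi^j}\,\frac{p^2}{\pi^2}\,|L(1,\chi^j)|^2,
\]
with $a_{\chi^j} = 1$ precisely when $j$ is odd. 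Summing over $j = 1, \ldots, m-1$ and invoking the definition of $M(p,m)$, the right-hand side collapses to $(m-1)\tfrac{p^2-1}{12} + \tfrac{mp^2}{2\pi^2}M(p,m)$.

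Next I would compute the same double sum the other way round. Expanding $|S(k,\chi^j)|^2 = \sum_{1 \le n_1, n_2 \le k} \chi^j(n_1)\overline{\chi^j(n_2)}$, summing over $j = 0,\ldots,m-1$, and using the orthogonality relation $\frac{1}{m}\sum_{j=0}^{m-1}\chi^j(n_1)\overline{\chi^j(n_2)} = \mathbf{1}_{\chi(n_1) = \chi(n_2)}$ (valid for $n_1, n_2$ coprime to $p$), one obtains $\sum_{j=0}^{m-1}|S(k,\chi^j)|^2 = m A_k$, where $A_k := \#\{(n_1,n_2) \in [1,k]^2 : \chi(n_1)=\chi(n_2)\}$. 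Removing the principal-character term $|S(k,\chi^0)|^2 = k^2$ and then summing over $k$ from $1$ to $p-1$ produces the alternative expression $m(p-1)\mathcal{A}(p,d) - \tfrac{(p-1)p(2p-1)}{6}$. Equating the two forms and solving for $M(p,m)$ should yield (\ref{MA}) after routine bookkeeping; the substantive check reduces to the polynomial identity $\tfrac{p(2p-1)}{3m} + \tfrac{(m-1)(p+1)}{6m} = \tfrac{(4d+1)p + d + 1}{6}$, which follows from the relation $md = p-1$.

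Both inequalities in (\ref{boundingApd}) are then immediate consequences of (\ref{MA}): the lower bound is equivalent to $M(p,m) \geq 0$, and the upper bound is obtained by substituting the trivial estimate (\ref{trivialMp}) into (\ref{MA}) (in fact with a little room to spare, since one even obtains $(5d+1)p - d + 1$ in place of $(5d+1)p + d + 1$). Finally (\ref{boundMpm}) follows by applying the explicit bound $|L(1,\chi)| \le \tfrac{1}{2}(\log p + 2 + \gamma - \log\pi)$ from \cite{LouCRAS323} term by term in the average defining $M(p,m)$. The only real obstacle is the algebraic bookkeeping that matches the combined expression to the precise form of (\ref{MA}); the rest is orthogonality and invocation of already-stated results.
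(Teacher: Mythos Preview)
Your proposal is correct and follows essentially the same route as the paper's own proof: apply Theorem~\ref{thElma} to the $m-1$ non-principal powers $\chi^j$, evaluate the same double sum $\sum_{j}\sum_{k}|S(k,\chi^j)|^2$ via orthogonality over $\langle\chi\rangle$ to bring in $\mathcal{A}(p,d)$, subtract the principal-character contribution $\sum_k k^2$, and solve for $M(p,m)$; the bounds then follow from $M(p,m)\ge 0$, from (\ref{trivialMp}), and from \cite{LouCRAS323} exactly as you describe. Your observation that the full strength of (\ref{trivialMp}) actually yields the slightly sharper upper bound $(5d+1)p - d + 1$ in place of $(5d+1)p + d + 1$ is correct; the paper simply states the weaker form.
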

}}

\begin{proof}
By Theorem \ref{thElma}, for $j$ odd we have 
$$\vert L(1,\chi^j)\vert^2
=-\frac{\pi^2}{12}\left (1-\frac{1}{p^2}\right )
+\frac{\pi^2}{p^2}\sum_{k=1}^{p-1}\vert S(k,\chi^j)\vert^2$$
The $\chi^j$'s are primitive modulo $p$ for $1\leq j\leq m-1$, 
whereas $\chi^0$ is the non-primitive trivial Dirichlet character modulo $p$. 
Therefore, on the one hand we have
$$\sum_{j=0}^{m-1}\sum_{k=1}^{p-1}\vert S(k,\chi^{j})\vert^2
=\sum_{j=0}^{m-1}\sum_{k=1}^{p-1}\left\vert\sum_{l=1}^k\chi^{j} (l)\right\vert^2
=\sum_{j=0}^{m-1}\sum_{k=1}^{p-1}\sum_{1\leq l_1,l_2\leq k}\chi^{j}(l_1)\overline{\chi^{j}(l_2)}
=m(p-1){\cal A}(p,d),$$
by using the orthogonality relation
$$\sum_{j=0}^{m-1}\chi^j (n_1)\overline{\chi^j (n_2)}
=\begin{cases}
m&\hbox{if $\chi (n_1)=\chi(n_2)\neq 0$,}\\
0&\hbox{otherwise.}\\
\end{cases}$$
On the other hand, Theorem \ref{thElma} gives 
$$\sum_{j=1}^{m-1}\sum_{k=1}^{p-1}\vert S(k,\chi^{j})\vert^2
=\frac{(m-1)(p^2-1)}{12}
+\frac{mp^2}{2\pi^2}M(p,m).$$
Since
$$\sum_{k=1}^{p-1}\vert S(k,\chi^{0})\vert^2
=\sum_{k=1}^{p-1}\left\vert\sum_{l=1}^k 1\right\vert^2
=\sum_{k=1}^{p-1} k^2 
=\frac{(p-1)p(2p-1)}{6},$$
it follows that 
$$m(p-1){\cal A}(p,d)
=\frac{m(p^2-1)}{12}
+\frac{(p-1)^2(4p+1)}{12}
+\frac{mp^2}{2\pi^2}M(p,m).$$
The desired identity (\ref{MA}) follows.

Now, noticing that $M(p,m)\geq 0$, the lower bound on ${\cal A}(p,d)$ in (\ref{boundingApd}) follows from (\ref{MA}).
Finally, by \eqref{trivialMp} we have
$$M(p,m)
\leq\frac{d\pi^2}{6}\left (1-\frac{1}{p}\right ).$$
Plugging this bound in (\ref{MA}) we obtain the upper bound on ${\cal A}(p,d)$ in (\ref{boundingApd}). 
\end{proof}

By (\ref{formulahrel}) and (\ref{MA}), upper bounds on ${\cal A}(p,d)$ 
would yield upper bounds on $h_{p,d}^-$.
More precisely, 
for $d>1$,
$M(p,m)\leq\pi^2/6$ which is equivalent to ${\cal A}(p,d)<\bigl ((4d+2)p+d+2\bigr )/12$ 
would yield $h_{p,d}^-\leq 2(p/24)^{m/4}$.

\begin{remark}
Corollary \ref{corElma} gives $\frac{13p+4}{12}\leq {\cal A}(p,3)\leq\frac{16p+4}{12}$,
whereas ${\cal A}(p,3) =\frac{14p+4}{12}$, 
by (\ref{Ap3}) below. 
Hence it should be possible to improve upon the upper bound in \eqref{boundingApd}.
\end{remark}

\subsection{Exact formulas for $M(p,m)$ and ${\cal A}(p,d)$ in specific cases}\label{explicit}
There are only four cases listed below where an explicit formula for ${\cal A}(p,d)$ is known.

\begin{enumerate}

\item By (\ref{Mp1}) and (\ref{MA}), for $d=1$ we have 
\begin{equation}\label{Ap1}
{\cal A}(p,1)
=\frac{p}{2}
=\frac{(2d+1)p}{6}.
\end{equation}

\item For $d=3$ we proved in \cite[Theorem 1]{LouBPASM64} 
that 
\begin{equation}\label{Mp3}
M(p,(p-1)/3) =\frac{\pi^2}{6}\left (1-\frac{1}{p}\right )
\ \ \ \ \ \hbox{(for $p\equiv 1\pmod 6$)}
\end{equation} 
 and the corresponding bound on the relative class number
\begin{equation}\label{bounp3}
h_{p,3}^-
\leq 2\left (\frac{p}{24}\right )^{(p-1)/12}
=2\left (\frac{p}{24}\right )^{m/4}.
\end{equation}

By (\ref{MA}), 
this gives for $d=3$ and $p\equiv 1\pmod 6$, 
\begin{equation}\label{Ap3}
{\cal A}(p,3) 
=\frac{7p+2}{6}
=\frac{(2d+1)p}{6}+o(p).
\end{equation}

\item For $d=5$ we proved in \cite[Theorem 5]{LouBPASM64} that

\begin{equation}\label{Mp5}
M(p,(p-1)/5) 
=\frac{\pi^2}{6}\left (1+\frac{2a(a+1)^2-1}{p}\right )
\ \ \ \ \ \hbox{(for $p>5$ of the form $p =\frac{a^5-1}{a-1}$)}.
\end{equation} and the corresponding bound on the relative class number
\begin{equation}\label{bounp5}
h_{p,5}^-
\leq 2\left (\frac{p}{24}\right )^{(p-1)/20}
=2\left (\frac{p}{24}\right )^{m/4}.
\end{equation}
By (\ref{Mp5}) and (\ref{MA}), this implies 

\begin{equation}\label{Ap5}
{\cal A}(p,5)
=\frac{11p+3}{6}+\frac{a(a+1)^2p}{6(p-1)}
=\frac{(2d+1)p}{6}+o(p).
\end{equation}

\item For $d=(p-1)/2$ and $3<p\equiv 3\pmod 4$. 
In that situation, $\chi$ is the quadratic character given by the Legendre symbol $\chi (n) =\left (\frac{n}{p}\right )$, 
$L(1,\chi) =\pi h_{{\mathbb Q}(\sqrt{-p})}/\sqrt p$ and (\ref{MA}) gives 
$${\cal A}(p,(p-1)/2)
=\frac{4p^2-p+1}{24}+\frac{ph_{{\mathbb Q}(\sqrt{-p})}^2}{2(p-1)}.$$
\end{enumerate}
\begin{remark}
In fact, $d=1$ is the only case for which we could come up with a direct proof of the formula for ${\cal A}(p,d)$.
Indeed, we have $\chi_{p,p-1}(n_1)=\chi_{p,p-1}(n_2)$ if and only if $n_1\equiv n_2\pmod p$. 
Hence,
$${\cal A}(p,1)
=\frac{1}{p-1}\sum_{N=1}^{p-1} N 
=\frac{p}{2}.$$
It would be nice to have similar independent and direct proofs of (\ref{Ap3}) and (\ref{Ap5}). 
\end{remark}

\section{Evaluation of $M(p,m)$ for primes $p=(a^d-1)/(a-1)\equiv 1\pmod {2d}$}\label{primespecial}
We gave an explicit formula for ${\cal A}(p,3)$, see (\ref{Ap3}), 
and one for ${\cal A}(p,5)$, but only for the primes $p$ of the form $p=(a^5-1)/(a-1)$, see (\ref{Ap5}).
After some numerical computation for primes of the form $(a^5-2^5)/(a-2)$ or $(a^5-3^5)/(a-3)$ 
we could not guess any formula for $M(p,(p-1)/5)$ or ${\cal A}(p,5)$.
However, we now prove a general result which recover \eqref{Mp3} and \eqref{Mp5}(let us say that we forgot to deal with the case $a<0$ in the proof of \cite[Theorem 5]{LouBPASM64}). 
We want to point out that here again we do not directly compute ${\cal A}(p,d)$.
Instead we give an exact formula for $M(p,m)$ and then use (\ref{MA}) to deduce an expression for ${\cal A}(p,d)$.

\noindent\frame{\vbox{
\begin{proposition}\label{propdedekind}
Set 
$Q_l(X) 
=(X^l-1-l(X-1))/(X-1)^2\in {\mathbb Z}[X]$, $l\geq 1$. 
Hence, $Q_1(X) =0$, $Q_2(X)=1$
and $Q_l(X)=X^{l-2}+2X^{l-3}+\cdots+(l-2)X+(l-1)$ for $l\geq 2$.
Let $d\geq 3$ be a prime integer. 
For a prime integer of the form $p=(a^d-1)/(a-1) $ for some $a\neq -1,0,1$, 
we have 
\begin{equation}\label{Mpd}
M(p,(p-1)/d)
=\frac{\pi^2}{6}\left (1+\frac{2a(a+1)^2Q_{\frac{d-1}{2}}(a^2)-1}{p}\right ),
\end{equation}
\begin{equation}\label{Apd}
{\cal A}(p,d) 
=\frac{(2d+1)p+\frac{d+1}{2}}{6}
+\frac{p}{p-1}\cdot\frac{a(a+1)^2Q_{\frac{d-1}{2}}(a^2)}{6}
=\frac{2d+1}{6}p+O\left (p^{1-\frac{1}{d-1}}\right ),
\end{equation}
and 
$$h_{p,d}^-\leq 2\left (p/24\right )^{m/4}
\hbox{ for $p=(a^d-1)/(a-1)$ with $a\leq -2$}.$$
\end{proposition}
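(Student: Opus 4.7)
The plan is to reduce the computation to an evaluation of ${\cal A}(p,d)$ via identity \eqref{MA} from Corollary \ref{corElma}, and then exploit the fact that under the hypothesis $p=(a^d-1)/(a-1)$ the kernel of $\chi_{p,m}$ is the explicit subgroup $\{1,a,a^2,\ldots,a^{d-1}\} \pmod p$ of order $d$. Indeed, $a^d = 1 + (a-1)p$, so $a$ has order exactly $d$ modulo $p$ (using that $d$ is prime and $a\neq \pm 1$). The condition $\chi(n_1)=\chi(n_2)\neq 0$ appearing in \eqref{defApchi} is therefore equivalent to $n_1 \equiv a^j n_2 \pmod p$ for some $0\leq j \leq d-1$. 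Isolating the diagonal $j=0$ (contributing $p/2$ after averaging over $N$) gives
$${\cal A}(p,d) = \frac{p}{2} + \frac{1}{p-1}\sum_{j=1}^{d-1} T(p,b_j), \qquad b_j := a^j \bmod p,$$
where $T(p,b) = \#\{(k,N) : 1\leq k\leq N\leq p-1,\ bk\bmod p \leq N\}$.

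Next I would express $T(p,b)$ by interchanging summation: $T(p,b)=\sum_{k=1}^{p-1}\bigl(p - \max(k,\, bk\bmod p)\bigr)$, and use $\max(x,y)=\tfrac12(x+y+|x-y|)$ to isolate the nontrivial piece $\tfrac12\sum_k |k - (bk\bmod p)|$, which is a Dedekind-type sum attached to the pair $(b,p)$. Applying the Dedekind reciprocity formula $s(b,p)+s(p,b) = (b^2+p^2+1-3bp)/(12bp)$ transfers the computation to the \emph{small} modulus $b=a^j$: here the special form of $p$ is decisive, since $a^d\equiv 1\pmod p$ together with $p = 1 + a + \cdots + a^{d-1}$ gives a transparent reduction of $p$ modulo $a^j$, and $s(p,a^j)$ admits a closed form. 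Summing over $j=1,\dots,d-1$ and collecting the resulting geometric-type expressions in $a,a^2,\dots,a^{d-1}$ is where the polynomial $2a(a+1)^2\,Q_{(d-1)/2}(a^2)$ should emerge, with the factor $(a+1)^2$ reflecting the odd/even splitting attached to odd characters and the exponent $(d-1)/2$ reflecting the corresponding index range. Once \eqref{Mpd} is obtained, identity \eqref{Apd} follows by substitution into \eqref{MA}, and the error term $O(p^{1-1/(d-1)})$ is visible from $|a|\asymp p^{1/(d-1)}$.

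For the class-number bound it suffices to observe that the coefficients of $Q_l(X)$ are nonnegative, so $Q_{(d-1)/2}(a^2)>0$ for $a\neq 0$; and for $a\leq -2$ we have $a<0$ and $(a+1)^2\geq 1$, so $2a(a+1)^2 Q_{(d-1)/2}(a^2)\leq -1$. Substituting into \eqref{Mpd} yields $M(p,(p-1)/d)\leq \pi^2/6$, and combining with $w_{p,d}=2$ (valid for $d>1$) and \eqref{formulahrel} gives the claimed $h_{p,d}^-\leq 2(p/24)^{m/4}$.

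The main obstacle is the explicit Dedekind-sum evaluation: one has to verify that the transferred sums $s(p, a^j)$ interact cleanly enough with the geometric decomposition of $p$ in powers of $a$ for the telescoping over $j=1,\ldots,d-1$ to collapse exactly to $2a(a+1)^2 Q_{(d-1)/2}(a^2)$. Careful bookkeeping will also be needed to handle the range and sign of the residues $b_j=a^j\bmod p$ when $a\leq -2$ (the integer $a^j$ alternates in sign before reduction), as well as the bookkeeping between the all-characters orthogonality that naturally produces ${\cal A}(p,d)$ and the odd-character restriction in \eqref{defMpm}.
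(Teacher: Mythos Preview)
Your route is viable and lands on the same Dedekind-sum computation as the paper, but the starting point differs. The paper does \emph{not} go through ${\cal A}(p,d)$: it quotes from \cite{LouBPASM64} a ready-made formula
\[
M\bigl(p,(p-1)/d\bigr)=\frac{\pi^2}{6}\Bigl(1+\frac{N}{p}\Bigr),\qquad
N=24\sum_{k=1}^{(d-1)/2}s(a^k,p)-3+\frac{2}{p},
\]
and then evaluates each $s(a^k,p)$ by \emph{two} applications of reciprocity together with the complementary law. Concretely, setting $p_k=(a^k-1)/(a-1)$ one has $p\equiv p_k\pmod{a^k}$ and $a^k\equiv 1\pmod{p_k}$, so
\[
s(a^k,p)\ \xrightarrow{\text{recip.}}\ s(p,a^k)=s(p_k,a^k)\ \xrightarrow{\text{recip.}}\ s(a^k,p_k)=s(1,p_k),
\]
and $s(1,p_k)$ is explicit. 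Summing the resulting closed forms over $k=1,\dots,(d-1)/2$ yields $N=2a(a+1)^2Q_{(d-1)/2}(a^2)-1$. Your plan of ``one reciprocity and then $s(p,a^j)$ admits a closed form'' understates this: after the first transfer you still face $s(p_j,a^j)$, and the second reciprocity plus the complementary law are what close the computation.

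Your detour through ${\cal A}(p,d)$ does recover the same starting formula, but only once you supply the link between your ``Dedekind-type'' sum and the genuine Dedekind sum. The precise identity you need (valid for $b\not\equiv 0,1\pmod p$) is
\[
T(p,b)=\sum_{k=1}^{p-1}\min\bigl(k,\,bk\bmod p\bigr)=p\,s(b,p)+\frac{(p-1)(4p+1)}{12},
\]
equivalently $\sum_{k=1}^{p-1}\lvert k-(bk\bmod p)\rvert=-2p\,s(b,p)+\tfrac{(p-1)(2p-1)}{6}$. Using this together with the symmetry $s(a^j,p)=s(a^{d-j},p)$ (i.e.\ $T(p,b)=T(p,b^{-1})$) collapses your sum over $j=1,\dots,d-1$ to $2\sum_{k=1}^{(d-1)/2}s(a^k,p)$, and after substitution into \eqref{MA} you land exactly on the displayed formula for $N$ above. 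So your approach is a self-contained rederivation of the input from \cite{LouBPASM64}, at the cost of proving the identity for $T(p,b)$; the paper simply cites it.

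Your argument for the class-number bound is correct in spirit: for $a\le -2$ one has $a<0$, $(a+1)^2>0$, and $Q_{(d-1)/2}(a^2)\ge 0$ (with equality only when $d=3$), hence $2a(a+1)^2Q_{(d-1)/2}(a^2)-1\le -1<0$, so $M<\pi^2/6$ and \eqref{formulahrel} gives $h_{p,d}^-\le 2(p/24)^{m/4}$. (Your claimed inequality ``$\le -1$'' on $2a(a+1)^2Q$ itself fails for $d=3$, but that does not affect the conclusion.)
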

}}

\begin{proof}
We keep the notation of \cite{LouBPASM64}, 
use the properties of Dedekind sums 
$$s(c,d)
={1\over 4d}\sum_{n=1}^{d-1}\cot \left ({\pi n\over d}\right )\cot \left ({\pi nc\over d}\right )
\ \ \ \ \ (c\in {\mathbb Z},\ d\in {\mathbb Z}\setminus\{-1,0,1\})$$
recalled in \cite{LouBPASM64}
and set $l=(d-1)/2$. 
To deal in one stroke with the two cases $a\leq -2$ and $a\geq 2$ 
we have extended the definition of Dedekind sums, allowing $d$ to be negative. 
Letting $\epsilon(d)\in\{\pm 1\}$ denote the sign of $0\neq d\in {\mathbb Z}$, 
the reciprocity and complementary laws for these generalized Dedekind sums are 
$$s(c,d)+s(d,c) 
=\frac{c^2+d^2-3\epsilon(c)\epsilon(d)cd +1}{12cd}
\hbox{ and }
s(1,d) 
=\frac{d^2-3\epsilon(d)d+2}{12d}.$$
Set $p_k =(a^k-1)/(a-1)$ and $\epsilon=\epsilon(a)$. 
Then $\epsilon(p)=1$, $\epsilon(a^k) =\epsilon^k$ and $\epsilon (p_k) =\epsilon^{k+1}$. 
We have 
$$M(p,(p-1)/d)
=\frac{\pi^2}{6}\left (1+\frac{N}{p}\right ),
\hbox{ where }
N
=24\left (\sum_{k=1}^{l}
s(a^k,p)
\right )
-3
+\frac{2}{p}.$$
Now, $p\equiv p_k\pmod{a^k}$ and $a^k\equiv 1\pmod {p_k}$ for $1\leq k\leq d$. 
Hence 
$$s(a^k,p) 
=\frac{a^{2k}+p^2-3\epsilon^ka^kp+1}{12a^kp}
-s(p,a^k)
=\frac{a^{2k}+p^2-3\epsilon^ka^kp+1}{12a^kp}
-s(p_k,a^k)$$
and
$$s(p_k,a^k)
=\frac{p_k^2+a^{2k}-3\epsilon p_ka^k+1}{12p_ka^k}
-s(a^k,p_k)
=\frac{p_k^2+a^{2k}-3\epsilon p_ka^k+1}{12p_ka^k}
-s(1,p_k),$$
by the reciprocity law for Dedekind sums. 
Since
$$s(1,p_k)
=\frac{p_k^2-3\epsilon^{k+1}p_k+2}{12p_k},$$ 
by the complementary law for Dedekind sums,
we obtain
$$s(a^k,p_k)
=\frac{a^{2k}+p^2-3\epsilon^ka^kp+1}{12a^kp}
-\left (\frac{p_k^2+a^{2k}-3\epsilon p_ka^k+1}{12p_ka^k}
-\frac{p_k^2-3\epsilon^{k+1}p_k+2}{12p_k}
\right )$$
and
$$s(a^k,p_k)
=\frac{a^{2k}+p^2-3a^kp+1}{12a^kp}
+(a-1)\frac{p_k^2+1-a^k}{12a^k}.$$
Notice that the more natural congruence $p\equiv p_{k-1}\pmod{a^k}$ and $a^k\equiv a\pmod {p_{k-1}}$ 
would lead to slightly more complicated computations. 
An easy but boring computation using $\sum_{k=1}^l b^k =b(b^l-1)/(b-1)$
then finally yields $N=2a(a+1)^2Q_{l}(a^2)-1$, as desired.
\end{proof}

\begin{remark}  It is widely believed since a long time that there are infinitely many primes of the form  $p=(a^d-1)/(a-1)$, as firstly investigated in the special case  of Mersenne primes $2^p-1$ ($a=2$). More precise results about the number of such primes less than $x$ are expected. This is sometimes called Lenstra-Pomerance-Wagstaff conjecture (see the survey \cite{Mersenne} for more information and references on this topic).  \end{remark}

\section{Asymptotic behavior of Elma's sums}\label{sectionasymptotic}
Let us remark that $$\frac{{\cal A}(p,d)}{dp/3}
=\frac{pM(p,m)}{2\pi^2d(p-1)}
+1+\frac{1}{4d}+\frac{1}{4p}+\frac{1}{4dp},$$
by (\ref{MA}). Hence,  as $d/\log^2p$ tend to infinity, we have by (\ref{boundMpm})
\begin{equation}\label{Elmaremark} {\cal A}(p,d) \sim \frac{dp}{3} \end{equation} as noticed in \cite{Elma}. As conjectured by Elma, we could expect the same behavior in a wider range of $d$. Our goal in this section is to prove that \eqref{Elmaremark}  holds true without any restriction on the size of the parameter $d$. Moreover when $d$ is constant, we obtain a refined asymptotic formula $A(p,d)\sim \frac{2d+1}{6}$  which is in accordance with the exact formulas from Section \ref{explicit}.

\noindent\frame{\vbox{
\begin{theorem}\label{conjecture}
As $p$ tends to infinity and $d\geq 1$ runs over the odd divisors of $p-1$, 
we have 
$${\cal A}(p,d) =\frac{dp}{3} +o(dp).$$
\end{theorem}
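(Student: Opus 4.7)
The strategy is to read Theorem \ref{conjecture} off directly from identity \eqref{MA} by substituting sufficiently strong upper bounds on the mean-square $M(p,m)$. Solving \eqref{MA} for $\mathcal{A}(p,d)$ gives
\begin{equation*}
\mathcal{A}(p,d) = \frac{dp}{3} + \frac{p+d+1}{12} + \frac{p}{2\pi^{2}}\bigl(1+O(1/p)\bigr)\, M(p,m),
\end{equation*}
so proving the theorem reduces to showing $M(p,m) = o(d)$ (the statement is of course meaningful only under the tacit hypothesis $d\to\infty$, since otherwise the fixed $p/12$ term already fails to be $o(dp) = o(p)$, as one sees already at $d=1$ where $\mathcal{A}(p,1)=p/2$).

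I would then partition the admissible values of $d$ into three regimes and invoke, in each, a bound on $M(p,m)$ already available in the excerpt. First, when $d\leq (1/2-\varepsilon)\log p/\log_2 p$, the asymptotic part of Theorem \ref{mainth1} supplies $M(p,m) = \frac{\pi^{2}}{6}(1+o(1)) = O(1)$, which is $o(d)$ as soon as $d\to\infty$. Second, when $\log d = o(\log p/\log_2 p)$ but $d$ lies outside the first regime, the uniform estimate \eqref{boundMpmlarge} of Theorem \ref{mainth1} yields $M(p,m)\ll (\log_2 d)^{2} = o(d)$. Third, when $\log d$ is not $o(\log p/\log_2 p)$, we have $d\geq \exp(c\log p/\log_2 p) \gg \log^{2} p$ for large $p$, so the unconditional bound \eqref{boundMpm}, $M(p,m)\ll \log^{2} p$, is already sufficient to give $M(p,m)=o(d)$.

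Combining the three cases produces $\mathcal{A}(p,d) - dp/3 = O(p) + o(dp) = o(dp)$, which is the desired asymptotic. Thus the main obstacle is not in this deduction, which is essentially formal once \eqref{MA} is in hand, but lies instead in the supply of inputs: the substantive estimates on $M(p,m)$ in the first two regimes are the content of Theorem \ref{mainth1}. I expect the sharpness of the constant $\pi^{2}/6$ in the first (small-$d$) regime to be the most delicate point—requiring genuine arithmetic information beyond any trivial upper bound—while the bound in the second regime should rest on a careful analysis of large values of character sums over the relevant subgroups, in the spirit of the machinery alluded to in the introduction.
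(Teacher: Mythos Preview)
Your deduction from \eqref{MA} is algebraically correct, and your observation that the statement tacitly requires $d\to\infty$ is on the mark (for bounded $d$ the refined constant is $(2d+1)/6$, not $d/3$). However, your proposal inverts the logical flow of the paper and is, within that flow, circular. In the paper, Theorem~\ref{mainth1} is not an independent input: Section~5 derives it from Theorems~\ref{asympdpetit} and~\ref{asymptoticlarge} together with \eqref{MA}. Those two theorems are direct asymptotic formulae for $\mathcal{A}(p,d)$ itself---$\mathcal{A}(p,d)=\frac{2d+1}{6}p+O\bigl(d(\log p)^2 p^{1-1/\gamma(d)}\bigr)$ for small $d$, and $\mathcal{A}(p,d)=\frac{dp}{3}+O\bigl(p(\log_2 d)^2\bigr)$ (resp.\ $O(p\log^2 p)$) for large $d$---and Theorem~\ref{conjecture} is read off from them immediately, with no passage through $M(p,m)$. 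So invoking Theorem~\ref{mainth1} to prove Theorem~\ref{conjecture} amounts to translating $\mathcal{A}(p,d)$ to $M(p,m)$ via \eqref{MA} and then translating back.

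To your credit, you identify exactly where the substance lies, and your instincts about the methods are accurate. The paper's small-$d$ argument (Theorem~\ref{asympdpetit}) rewrites $\mathcal{A}(p,d)$ as an average of a bounded-variation function over the two-dimensional lattice points $\{(x/p,\theta x/p)\}$ and controls the discrepancy via Erd\H{o}s--Tur\'an--Koksma together with a resultant bound for roots of unity; this is what produces the sharp main term $(2d+1)p/6$. The large-$d$ argument (Theorem~\ref{asymptoticlarge}) expands $\mathcal{A}(p,d)$ by orthogonality into sums of $|S(N,\Psi)|^2$ and then stratifies the characters by the size of $m(\Psi)$, using the Bober--Goldmakher--Granville--Koukoulopoulos frequency estimate (Theorem~\ref{frequency}) to handle the tail. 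Both arguments operate directly on $\mathcal{A}(p,d)$; the route through $M(p,m)$ is the corollary, not the engine.
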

}}

We will split the discussion into two cases depending on whether $d$ goes or not to infinity.
Theorem \ref{conjecture} follows from Theorems \ref{asympdpetit} ($d$ small) and \ref{asymptoticlarge} ($d$ large) proved below. In the former case, we obtain the more precise asymptotic expansion $A(p,d)\sim \frac{2d+1}{6}$. By (\ref{MA}), this allows us to deduce an asymptotic formula for $M(p,m)$. In the latter case, Theorem \ref{conjecture} is not sufficient to infer an asymptotic formula for $M(p,m)$ and only implies an upper bound.

\subsection{Asymptotic for small $d$'s}
Our goal in this section is to prove the following theorem which gives Theorem \ref{conjecture} for small $d$'s:

\noindent\frame{\vbox{
\begin{theorem}\label{asympdpetit}
Let $d$ range over the odd integers. 
Set $\gamma(d)=\max_{k \mid d}\phi(k)$. 
Hence $\gamma(d)\leq d-1$, with equality whenever $d$ is an odd prime.
Let $p$ range over the prime integers such that $p\equiv 1 \pmod {2d}$. 
Then we have the following asymptotic formula
$${\cal A}(p,d)
= \frac{2d+1}{6}p + O\left(d(\log p)^{2}p^{1-1/\gamma(d)} \right)$$ 
where the implicit constant in the error term is absolute.\\ 
In particular if $d \leq c\frac{\log p}{\log_2 p}$ with $c<1/2$, we have 
$${\cal A}(p,d)\sim \frac{2d+1}{6}p.$$
\end{theorem}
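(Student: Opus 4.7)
My plan is to first recast $\mathcal{A}(p,d)$ as a sum over the subgroup $H=\ker\chi\leq\mathbb{F}_p^\times$ of order $d$. Since $\chi(n_1)=\chi(n_2)$ iff there is a unique $h\in H$ with $n_1\equiv h n_2\pmod p$, a direct interchange of summation combined with the elementary identities $\sum_{N=1}^{p-1}\mathbf 1_{\max(a,b)\le N}=p-\max(a,b)$ and $2\max(a,b)=a+b+|a-b|$ produces the clean reformulation
\begin{equation*}
\mathcal{A}(p,d)=\frac{dp}{2}-\frac{1}{2(p-1)}\sum_{h\in H\setminus\{1\}}S(h),\qquad S(h):=\sum_{n=1}^{p-1}\bigl|hn\bmod p-n\bigr|.
\end{equation*}
Since $\tfrac{dp}{2}-\tfrac{(d-1)p}{6}=\tfrac{(2d+1)p}{6}$, the theorem reduces to establishing $S(h)=\tfrac{p(p-1)}{3}+O\bigl(p^{2-1/\gamma(d)}(\log p)^2\bigr)$ uniformly for $h\in H\setminus\{1\}$.

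Second, I would interpret $S(h)/(p(p-1))$ as an empirical average of the function $F(x,y):=|x-y|$ over the $(p-1)$-point set $\mathcal P_h:=\{(n/p,\{hn/p\}):1\le n\le p-1\}\subset[0,1)^2$, exploiting the identity $|hn\bmod p-n|=p\,|\{hn/p\}-n/p|$ valid for $1\le n\le p-1$. Because $\iint_{[0,1)^2}F\,\mathrm dx\,\mathrm dy=\tfrac13$ and $F$ has bounded Hardy--Krause variation, Koksma's inequality yields $S(h)=\tfrac{p(p-1)}{3}+O(p(p-1)\,D^*(\mathcal P_h))$, where $D^*(\mathcal P_h)$ denotes the star discrepancy of $\mathcal P_h$. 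Applying the two-dimensional Erd\H{o}s--Tur\'an--Koksma inequality with a cutoff $K$, together with the identity $\sum_{n=1}^{p-1}e_p((k_1+hk_2)n)=(p-1)\mathbf 1_{k_1+hk_2\equiv 0\pmod p}-\mathbf 1_{k_1+hk_2\not\equiv 0\pmod p}$, reduces the task to proving a Dedekind-type bound
\begin{equation*}
\sum_{\substack{0<\max(|k_1|,|k_2|)\le K\\ k_1+hk_2\equiv 0\pmod p}}\frac{1}{|k_1|\,|k_2|}\ll\frac{(\log K)^2}{K}
\end{equation*}
for the optimal choice $K\asymp p^{1/\gamma(d)}$.

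The hard part will be this Dedekind-type estimate, which is where the arithmetic of the multiplicative subgroup $H$ enters. Every $h\in H$ has multiplicative order $e\mid d$, so $h$ satisfies the cyclotomic relation $\Phi_e(h)\equiv 0\pmod p$ with $\deg\Phi_e=\phi(e)\le\gamma(d)$. I would combine this with a Minkowski/pigeonhole argument applied to the $\phi(e)$-dimensional lattice
\begin{equation*}
\Lambda_h:=\{(a_0,\ldots,a_{\phi(e)-1})\in\mathbb Z^{\phi(e)}:a_0+a_1 h+\cdots+a_{\phi(e)-1}h^{\phi(e)-1}\equiv 0\pmod p\},
\end{equation*}
of covolume $p$, in order to obtain the lower bound $\max_i|a_i|\gg p^{1/\phi(e)}\ge p^{1/\gamma(d)}$ on any non-trivial short vector in $\Lambda_h$; translating this back to the two-variable relation $k_1\equiv-hk_2\pmod p$ and decomposing the ranges of $|k_1|$ and $|k_2|$ dyadically should yield $D^*(\mathcal P_h)\ll(\log p)^2 p^{-1/\gamma(d)}$. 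Summing the individual $p(p-1)D^*(\mathcal P_h)$ errors over the $d-1$ non-identity elements of $H$ and dividing by $2(p-1)$ produces the advertised $O\bigl(d(\log p)^2 p^{1-1/\gamma(d)}\bigr)$ term in $\mathcal{A}(p,d)$. Finally, the asymptotic $\mathcal{A}(p,d)\sim\tfrac{2d+1}{6}p$ in the stated range follows immediately, since $d\le c\log p/\log_2 p$ with $c<1/2$ forces $p^{1/\gamma(d)}\ge p^{1/(d-1)}\gg(\log p)^{2+\delta}$ for some $\delta>0$, making the error $o(dp)$.
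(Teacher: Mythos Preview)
Your overall strategy coincides with the paper's: both rewrite $\mathcal A(p,d)$ as a sum over $h\in H=\ker\chi$ of two-dimensional averages, apply the Koksma--Hlawka inequality to each point set $\{(n/p,\{hn/p\}):1\le n\le p-1\}$, bound the resulting discrepancy via Erd\H os--Tur\'an--Koksma, and reduce everything to the lower bound
\[
\rho(h,p):=\min\bigl\{r(\mathbf k):k_1+hk_2\equiv 0\pmod p,\ \mathbf k\neq 0\bigr\}\gg p^{1/\phi(e)}
\]
for $h$ of multiplicative order $e\mid d$. Your use of $F(x,y)=|x-y|$ in place of the paper's $f_d(x,y)=x/(d-1)+\min(x,y)$ is only a cosmetic variant (they differ by a linear function whose lattice sum is trivial), and is arguably cleaner.

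The one genuine gap is your justification of the lower bound $\max_i|a_i|\gg p^{1/\phi(e)}$ on nonzero vectors of $\Lambda_h$. Minkowski's theorem and pigeonhole arguments go the \emph{wrong way}: they guarantee that a lattice of covolume $p$ in $\mathbb Z^{\phi(e)}$ \emph{does} contain a nonzero vector with $\max_i|a_i|\ll p^{1/\phi(e)}$, the opposite of what you need. A priori such a lattice can be arbitrarily skewed and contain a vector of norm $1$; what rules this out here is the irreducibility of $\Phi_e$, not the geometry of numbers. The paper's argument is algebraic: if $k_1+hk_2\equiv 0\pmod p$ with $(k_1,k_2)\neq(0,0)$, then $P(X)=k_1+k_2X$ and $\Phi_e(X)$ share the root $h$ modulo $p$, so $p$ divides $\mathrm{Res}(P,\Phi_e)$; since $\Phi_e$ is irreducible of degree $\phi(e)\ge 2>\deg P$, this resultant is nonzero, hence $|\mathrm{Res}(P,\Phi_e)|\ge p$, and Hadamard's inequality on the Sylvester determinant then forces $\max(|k_1|,|k_2|)\gg p^{1/\phi(e)}$. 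Once you replace ``Minkowski/pigeonhole'' by this resultant argument, your proposal goes through (and with your cutoff $K$ just below $\rho(h,p)$ you in fact save the factor $(\log p)^2$ over the paper's bound, which instead takes $H=p-1$ and invokes Niederreiter's estimate $\sigma(h,p)\ll(\log p)^2/\rho(h,p)$).
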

}}

\begin{remark} 
Let us point out that (\ref{Apd}) shows that the power of $p$ in the error term of Theorem \ref{asympdpetit} is optimal. 
\end{remark}

\subsubsection{Results from uniform distribution theory}
For any fixed integer $s$, we consider the $s$-dimensional cube $I_s=\left[0,1\right]^s$ 
equipped with its $s$-dimensional Lebesgue measure $\lambda_s$. 
We denote by $\mathcal{B}$ the set of rectangular boxes of the form 
$$\prod_{i=1}^{s}[\alpha_i,\beta_i)
=\left\{x\in I_s, \alpha_i\leq x_i <\beta_i\right\}$$ where $0\leq \alpha_i<\beta_i\leq 1.$

If $S$ is a finite subset of $I^s$, we define the discrepancy $D(S)$ by 
$$D(S)
=\sup_{B \in \mathcal{B}}\left\vert \frac{\# (B\cap S)}{\# S}-\lambda_s(B)\right\vert.$$
The discrepancy measures in a quantitative way the deviation of a pointset $S$ from equidistribution. 
In particular a sequence of sets $S_n$ is uniformly distributed if and only if $D(S_n) \xrightarrow[n\to \infty]{} 0$. 
More precisely we have the \textbf{Koksma-Hlawka inequality}:

\begin{theorem}\cite[Theorem $1.14$]{tichy}\label{koksma}
Let $f(\mathbf{x})$ a function of bounded variation on $I_s$ in the sense of Hardy and Krause and 
$\mathbf{x_1},\dots,\mathbf{x_N}$ a finite sequence of points in $I_s$. 
Then
$$\left\vert \frac{1}{N} \sum_{i=1}^{N}f(\mathbf{x_i})-\int_{I_s} f(u)d\lambda_s(u) \right\vert 
\leq V(f)D(S)$$
 where $V(f)$ is the Hardy-Krause variation of $f$ (see also \cite[Chapter $2$]{nied}).
 \end{theorem}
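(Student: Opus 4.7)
The plan is to express ${\cal A}(p,d)$ as a Riemann-type sum over the subgroup $H \subset \mathbb{F}_p^*$ of order $d$, extract the main term via an explicit integral, and control the remaining error through a star-discrepancy estimate that exploits a cyclotomic lower bound on the integer representatives of $H \setminus \{1\}$.

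First, I would switch the order of summation in the definition of ${\cal A}(p,d)$: since $\chi$ has kernel $H$, the condition $\chi(n_1) = \chi(n_2) \neq 0$ translates to $n_1 \equiv h n_2 \pmod p$ for a unique $h \in H$, yielding
$${\cal A}(p,d) = \frac{1}{p-1} \sum_{h \in H} \sum_{n=1}^{p-1} \bigl(p - \max(n, hn \bmod p)\bigr).$$
The term $h = 1$ contributes exactly $p/2$, and for each $h \neq 1$ the inner sum rewrites as $\frac{p}{p-1} \sum_n f(n/p, \{hn/p\})$ with $f(x,y) = 1 - \max(x,y)$ on $[0,1]^2$. A direct computation gives $\int_{[0,1]^2} f = 1/3$, and $f$ is piecewise affine with finite Hardy-Krause variation. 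Theorem \ref{koksma} then produces
$$\sum_{n=1}^{p-1} f(n/p, \{hn/p\}) = \frac{p-1}{3} + O\bigl((p-1) D_h^*\bigr),$$
so the main term becomes $\frac{2d+1}{6}p$ and the error reduces to $O\bigl(p \sum_{h \in H \setminus \{1\}} D_h^*\bigr)$, where $D_h^*$ is the star-discrepancy of the two-dimensional point set $(n/p, \{hn/p\})_{n=1}^{p-1}$.

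To estimate $D_h^*$, I would apply the two-dimensional Erd\H{o}s--Tur\'an--Koksma inequality. The exponential sum $\sum_{n=1}^{p-1} \exp(2\pi i n(m_1 + m_2 h)/p)$ equals $p-1$ if $p \mid m_1 + m_2 h$ and $-1$ otherwise, yielding for any $M \leq p$
$$D_h^* \ll \frac{1}{M} + \frac{(\log M)^2}{p} + \sum_{k=1}^{M} \frac{1}{k\, \|kh\|_p},$$
where $\|\cdot\|_p$ denotes the distance to the nearest multiple of $p$. The key algebraic input is a cyclotomic lower bound on $\|h\|_p$: if $h \in H \setminus \{1\}$ has multiplicative order $k$, then $k \mid d$ and $k \geq 3$ (since $d$ is odd), so $h$ and $p-h$ are roots modulo $p$ of the cyclotomic polynomials $\Phi_k$ and $\Phi_{2k}$ respectively, each of degree $\phi(k) \leq \gamma(d)$; since $\Phi_k$ has no integer roots for $k \geq 3$, the integer $\Phi_k(h)$ is a nonzero multiple of $p$, and the trivial estimate $|\Phi_k(h)| \leq (|h|+1)^{\phi(k)}$ forces $\|h\|_p \gg p^{1/\gamma(d)}$. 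Choosing $M \asymp p/\|h\|_p$, for $k \leq M/2$ one has $\|kh\|_p = k\|h\|_p$ with no wraparound modulo $p$ so these terms contribute $O(1/\|h\|_p)$, while the remaining range is handled by a Dedekind-type estimate that adds an $O((\log p)/\|h\|_p)$ factor. This gives $D_h^* \ll (\log p)^2 p^{-1/\gamma(d)}$, and summing over the $d-1$ non-trivial elements of $H$ produces the claimed error $O\bigl(d (\log p)^2 p^{1 - 1/\gamma(d)}\bigr)$.

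The main obstacle is matching the truncation $M$ in the Erd\H{o}s--Tur\'an--Koksma bound to the cyclotomic lower bound on $\|h\|_p$: one must ensure that $kh$ does not wrap around modulo $p$ throughout the chosen range so that the clean identity $\|kh\|_p = k\|h\|_p$ applies, while simultaneously keeping $1/M$ small enough. Proposition \ref{propdedekind} shows the exponent $p^{1-1/\gamma(d)}$ is optimal for $d$ prime, so this balance is essentially sharp and no significant gain beyond polylogarithmic factors is available at this step.
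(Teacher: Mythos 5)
Your proposal does not prove the statement it was assigned. The statement is the Koksma--Hlawka inequality itself: for any $f$ of bounded Hardy--Krause variation on $I_s$ and any finite point set $S=\{\mathbf{x_1},\dots,\mathbf{x_N}\}$, the sampling error $\left\vert \frac{1}{N}\sum_i f(\mathbf{x_i})-\int_{I_s}f\,d\lambda_s\right\vert$ is bounded by $V(f)D(S)$. This is a classical result of discrepancy theory, which the paper does not prove but cites from Drmota--Tichy; a proof would proceed by multidimensional Abel summation (partial integration against the local discrepancy function $\frac{\#(S\cap[0,\mathbf{t}))}{N}-\lambda_s([0,\mathbf{t}))$, bounding the resulting Stieltjes integral by the variation of $f$ times the supremum of that function). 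None of this appears in your write-up. Instead, you sketch a proof of Theorem \ref{asympdpetit} --- the asymptotic ${\cal A}(p,d)=\frac{2d+1}{6}p+O(d(\log p)^2p^{1-1/\gamma(d)})$ --- and in doing so you explicitly invoke ``Theorem \ref{koksma}'' as a black box. Relative to the assigned statement this is circular: you assume the very inequality you were asked to establish, and the actual content of that inequality (how the Hardy--Krause variation controls the error against the discrepancy) is never addressed.

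For what it is worth, as an argument for Theorem \ref{asympdpetit} your sketch tracks the paper's own proof closely: the reduction of ${\cal A}(p,d)$ to sums of $\min(n_1,n_2)$ over the kernel subgroup, the Erd\H{o}s--Tur\'an--Koksma bound on the discrepancy of the point sets $(x/p,\{x\theta/p\})$, and the cyclotomic resultant argument giving $\rho(\theta,p)\gg p^{1/\phi(k)}$ (Lemma \ref{minvalue}) are all present, with minor differences (the paper works with $f_d(x,y)=\frac{x}{d-1}+\min(x,y)$ and quotes Niederreiter's Lemma \ref{niedpseudo} rather than carrying out your explicit truncation at $M\asymp p/\Vert h\Vert_p$). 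But this is a proof of a downstream application, not of the stated theorem.
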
 
 
In order to estimate the discrepancy, we recall the inequality of \textbf{Erd\H{o}s-Tur\'{a}n-Koksma}: 

\begin{theorem}\cite[Theorem $1.21$]{tichy}.
Let $S=\left\{\mathbf{x_1},\dots,\mathbf{x_N}\right\}$ be a set of points in $I_s$ and $H$ a positive integer. 
Then we have 
\begin{equation}\label{erdosturan} 
D(S) \leq \left(\frac{3}{2}\right)^s
\left(
\frac{2}{H+1} 
\sum_{0<\left\|\mathbf{h}\right\|_{\infty}\leq H} \frac{1}{r(\mathbf{h})}
\left\vert \frac{1}{N}\sum_{n=1}^{N}e(\langle \mathbf{h},\mathbf{x_N}\rangle)\right\vert
\right),
\end{equation}
where 
$e(z) = \exp(2 \pi i z)$,
$\displaystyle{r(\mathbf{h})
=\prod_{i=1}^{s} \max\{1,\vert h_i\vert\}}$ 
for $\mathbf{h}=(h_1,\dots,h_s) \in \mathbb{Z}^s$ 
and $\langle , \rangle$ denotes the standard inner product in $\mathbb{R}^{s}$.
\end{theorem}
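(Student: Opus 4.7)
\medskip

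The plan is to prove this by the classical Fourier-analytic route: approximate the characteristic function of a box by trigonometric polynomials of bounded degree, apply the approximation pointwise to the set $S$, and let Parseval/orthogonality convert the error into the exponential sums that appear on the right-hand side. The factor $(3/2)^s$ will come out naturally from a one-dimensional constant raised to the $s$-th power.

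First I would handle the one-dimensional case. For fixed $0 \leq \alpha < \beta \leq 1$, the task is to construct two trigonometric polynomials $M_H^{\pm}(x)$ of degree at most $H$, periodic mod $1$, satisfying $M_H^{-}(x) \leq \mathbf{1}_{[\alpha,\beta)}(x) \leq M_H^{+}(x)$ for all $x$, with $\int_0^1 (M_H^{+} - M_H^{-})\,dx = O(1/(H+1))$, and with Fourier coefficients $\widehat{M_H^{\pm}}(h)$ close to the Fourier coefficients of $\mathbf{1}_{[\alpha,\beta)}$ for $0 < |h| \leq H$. The standard way to do this is the Beurling--Selberg--Vaaler construction: one starts from Vaaler's extremal majorant/minorant of the sign function (via the entire function $J(z) = \bigl(\tfrac{\sin\pi z}{\pi}\bigr)^2\bigl(\sum_{n}\tfrac{\mathrm{sgn}(n)}{(z-n)^2} + \tfrac{2}{z}\bigr)$ and its Fej\'er-type truncation), periodizes to obtain extremal approximants to a step function on the circle, and subtracts two shifted copies to approximate the indicator of an interval. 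A careful bookkeeping of the Fourier coefficients yields $|\widehat{M_H^{\pm}}(h) - \widehat{\mathbf{1}_{[\alpha,\beta)}}(h)| \leq \tfrac{1}{H+1}$ for $0 < |h| \leq H$, together with the mass bound, and this is where the constant $3/2$ first appears (via $\|M_H^{+}\|_{L^1} \leq (\beta-\alpha) + \tfrac{1}{H+1}$ and the bookkeeping of Fourier tails).

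Next I would lift to dimension $s$ by taking tensor products. For a box $B = \prod_{i=1}^{s} [\alpha_i,\beta_i)$ write $\mathbf{1}_B(\mathbf{x}) = \prod_{i=1}^{s} \mathbf{1}_{[\alpha_i,\beta_i)}(x_i)$ and set $\mathcal{M}^{\pm}_H(\mathbf{x}) = \prod_i M_H^{\pm}(x_i)$; one checks $\mathcal{M}^{-}_H \leq \mathbf{1}_B \leq \mathcal{M}^{+}_H$ and that the mixed Fourier coefficients factor, so that for $\mathbf{h} \neq \mathbf{0}$ with $\|\mathbf{h}\|_{\infty} \leq H$,
\[
\bigl|\widehat{\mathcal{M}^{\pm}_H}(\mathbf{h})\bigr| \leq \prod_{i=1}^{s}\min\Bigl(1,\tfrac{1}{|h_i|}\Bigr) \cdot \bigl(1 + O(1/(H+1))\bigr) \ll \frac{1}{r(\mathbf{h})},
\]
while the coefficients for $\|\mathbf{h}\|_{\infty} > H$ are suppressed by the construction. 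Expanding $\mathcal{M}^{\pm}_H$ as a finite Fourier series and evaluating at $\mathbf{x_1},\dots,\mathbf{x_N}$ gives
\[
\tfrac{1}{N}\sum_{n=1}^{N}\mathcal{M}^{\pm}_H(\mathbf{x_n}) - \lambda_s(B) \;=\; \bigl(\widehat{\mathcal{M}^{\pm}_H}(\mathbf{0}) - \lambda_s(B)\bigr) \;+\; \sum_{0<\|\mathbf{h}\|_{\infty}\leq H}\widehat{\mathcal{M}^{\pm}_H}(\mathbf{h})\cdot \tfrac{1}{N}\sum_{n=1}^{N} e(\langle \mathbf{h},\mathbf{x_n}\rangle).
\]
Sandwiching $\tfrac{1}{N}\#(B\cap S)$ between the two averages, estimating the zero-th coefficient by $\int(\mathcal{M}^{+}_H - \mathcal{M}^{-}_H) = O\bigl(s/(H+1)\bigr)$, and inserting the pointwise bound $|\widehat{\mathcal{M}^{\pm}_H}(\mathbf{h})| \leq (3/2)^s/r(\mathbf{h})$, with an extra factor $2/(H+1)$ absorbing both the mass gap and the coefficient defect, yields the stated inequality for every box $B\in\mathcal{B}$, and hence (taking the supremum) the bound on $D(S)$.

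The main obstacle is the first step: getting the sharp one-dimensional majorant/minorant with Fourier support in $[-H,H]$ and optimal $L^1$ defect $1/(H+1)$, which is the content of the Beurling--Selberg--Vaaler extremal problem. Once those one-dimensional building blocks are in hand, the multidimensional bookkeeping and the final assembly into the Erd\H{o}s--Tur\'an--Koksma estimate are routine; the constants $(3/2)^s$ and $2/(H+1)$ are essentially forced by the construction.
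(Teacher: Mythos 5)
You should first note that the paper contains no proof of this statement: it is quoted directly from Drmota--Tichy (and, incidentally, with a typo --- the right-hand side of \eqref{erdosturan} should be $\frac{2}{H+1}+\sum_{0<\|\mathbf{h}\|_{\infty}\le H}\cdots$, a sum rather than a product, as the subsequent application with $H=p-1$ confirms; as literally printed the bound is false). So there is nothing internal to compare against, and your Beurling--Selberg--Vaaler route is a legitimate, well-known alternative: in dimension one it is essentially Vaaler's sharpening of the Erd\H{o}s--Tur\'an inequality.

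However, your multidimensional step has a genuine gap. You set $\mathcal{M}^{\pm}_H(\mathbf{x})=\prod_i M^{\pm}_H(x_i)$ and assert that one checks $\mathcal{M}^{-}_H\le \mathbf{1}_B\le\mathcal{M}^{+}_H$. The majorant half is fine, since $M^{+}_H\ge\mathbf{1}_{[\alpha_i,\beta_i)}\ge 0$ forces $M^{+}_H\ge 0$, so the product of majorants majorizes the product of indicators. The minorant half fails in general: a nonzero trigonometric polynomial lying below $\mathbf{1}_{[\alpha,\beta)}$ is $\le 0$ on the complement of the interval and, having only finitely many zeros, must be \emph{strictly} negative somewhere there; a product of an even number of negative values is positive, so $\prod_i M^{-}_H(x_i)$ can exceed $\prod_i\mathbf{1}_{[\alpha_i,\beta_i)}(x_i)=0$ at points where several coordinates lie outside their intervals. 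This is precisely why the multidimensional box-minorant problem is delicate, and $s\ge 2$ is the only case the paper uses. The standard repair is a telescoping correction: from $0\le\mathbf{1}_{[\alpha_i,\beta_i)}\le M_i^{+}$ and $M_i^{-}\le\mathbf{1}_{[\alpha_i,\beta_i)}$ one deduces $\mathbf{1}_B\ge\prod_i M_i^{+}-\sum_{j}\bigl(M_j^{+}-M_j^{-}\bigr)\prod_{i\ne j}M_i^{+}$, whose right-hand side is a genuine trigonometric minorant of degree at most $H$ in each variable with controllable Fourier coefficients; without this (or an equivalent device) the sandwich argument collapses. A secondary point: all your coefficient and mass estimates are stated with $O(\cdot)$ and $\ll$, so even after the repair the sketch yields the inequality only with unspecified constants, not the explicit $(3/2)^s$ and $2/(H+1)$ of the quoted statement --- harmless for the way the paper uses the result, but short of proving the theorem as stated.
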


\subsubsection{Notions from pseudo random generators theory}\label{pseudo}
In the rest of the paper the reults of the previous section will only be used for $s=2$. \\ We introduce some tools from the theory of pseudo-random generators and optimal coefficients in a very basic situation. 
We refer for more information to the survey of Korobov \cite{Korobov}, the work of Niederreiter \cite{niederreiter1977pseudo,niederreiter1978quasi} or the book of Konyagin and Shparlinski \cite[Chapter $12$]{igorkonya} and keep their notations. 
For any prime $p$ and integer $1\leq \lambda \leq p-1$ we define
$$\sigma(\lambda,p)
:=\sum_{0<\left\|\mathbf{h}\right\|_{\infty}\leq p-1} \frac{\delta_p(h_1+h_2 \lambda)}{r(\mathbf{h})}$$ 
where $\delta_p(a)= 1$ if $a = 0 \bmod p$ and $\delta_p(a)=0$ otherwise.

For any $\lambda$, we define 
$$\rho(\lambda,p)
=\min_{\mathbf{h} \neq 0} r(\mathbf{h})$$ 
where the min is taken over all non trivial solutions $\mathbf{h}=(h_1,h_2)$ of the congruence 
$$h_1+h_2\lambda = 0 \bmod p.$$ 
These two quantities are relatively close to each other:

\begin{lemma}\label{niedpseudo}
\cite[Theorem $3.8$]{niederreiter1977pseudo}.
There exists $C> 0$ such that, for any prime $p \geq 3$, and $\lambda\in \{1,\dots,p-1\}$ we have 
\begin{equation}\label{niederreiter}
\frac{1}{\rho(\lambda,p)}\leq \sigma(\lambda,p) \leq C \frac{(\log p)^{2}}{\rho(\lambda,p)}.
\end{equation}
\end{lemma}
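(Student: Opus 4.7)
My plan is to handle the two inequalities separately, with the lower bound being essentially trivial and the upper bound reducing to a one-dimensional sum followed by a lattice-point count. For the lower bound, pick $\mathbf{h}^{*} \in \mathbb{Z}^{2} \setminus \{0\}$ attaining $r(\mathbf{h}^{*}) = \rho(\lambda, p)$ on the lattice $L := \{\mathbf{h} \in \mathbb{Z}^{2} : h_{1} + \lambda h_{2} \equiv 0 \pmod p\}$. Since $(p, 0), (0, p) \in L$, subtracting appropriate multiples reduces each coordinate into $[-(p-1)/2, (p-1)/2]$ without increasing $r$; the reduced vector therefore appears in the sum defining $\sigma(\lambda, p)$ and contributes exactly $1/\rho(\lambda, p)$, while all remaining terms are non-negative.

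For the upper bound I would first reduce to a one-dimensional sum. For each $h \in \{1, \ldots, p-1\}$, set $\alpha(h) := (-\lambda h) \bmod p \in \{1, \ldots, p-1\}$ and $\beta(h) := \min\bigl(\alpha(h), p - \alpha(h)\bigr)$; the congruence admits exactly two representatives $h_{1} \in [-(p-1), p-1]$, of absolute values $\alpha(h)$ and $p - \alpha(h)$. Pairing each positive $h_{2}$ with $-h_{2}$ and using
\[
\frac{1}{\alpha} + \frac{1}{p - \alpha} = \frac{p}{\alpha(p-\alpha)} \leq \frac{2}{\beta(h)}
\]
yields $\sigma(\lambda, p) \leq 4 \sum_{h=1}^{p-1} 1/(h\,\beta(h))$, where every term satisfies $h\,\beta(h) \geq \rho(\lambda, p)$ by the very definition of $\rho$.

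The core of the argument is then the single-sum estimate $\sum_{h=1}^{p-1} 1/(h\,\beta(h)) \ll (\log p)^{2}/\rho(\lambda,p)$. By Abel summation this reduces to controlling the counting function $F(R) := \#\{h : h\,\beta(h) \leq R\}$ for $\rho \leq R \leq (p-1)^{2}$: one has $\sum 1/(h\,\beta(h)) = \int_{\rho}^{\infty} F(R)/R^{2}\,dR$. The pairs $(h, \beta(h))$ correspond to lattice points of $L$ in a hyperbolic region of area $\asymp R \log R$; combining the area bound with the fact that every non-zero lattice vector has $r$-value at least $\rho$ should produce $F(R) \ll 1 + R \log R / \rho$, which integrates (via $\int \log R /R\,dR = (\log R)^{2}/2$) to the claimed $(\log p)^{2}/\rho$ bound.

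The main obstacle is establishing this sharp counting estimate $F(R) \ll 1 + R \log R / \rho$. The naive area-of-region-over-determinant bound yields only $F(R) \ll 1 + R \log R / p$, which may be far too weak whenever $\rho \ll p$, because a very short vector of $L$ clusters many lattice points along a single direction. Replacing the determinant $p$ by $\rho$ in the denominator requires exploiting Minkowski's theorem on the two successive minima of $L$ (whose product is $\asymp p$), or, equivalently, working directly with the continued-fraction expansion of $\lambda/p$: its $O(\log p)$ convergents $p_{k}/q_{k}$ label all the potentially large contributions to the sum, each of size $O(1/\rho)$, from which the extra factor of $\log p$ above the trivial lower bound $1/\rho$ arises.
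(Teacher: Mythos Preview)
The paper does not supply its own proof of this lemma: it is quoted verbatim as \cite[Theorem~3.8]{niederreiter1977pseudo} and used as a black box. There is therefore no argument in the paper to compare against, and your write-up stands on its own.

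Your lower bound is correct. The minimiser $\mathbf{h}^{*}$ of $r(\cdot)$ on $L\setminus\{0\}$ automatically has $\|\mathbf{h}^{*}\|_{\infty}\le p-1$: if one coordinate had absolute value $\ge p$, subtracting the appropriate multiple of $(p,0)$ or $(0,p)\in L$ would strictly decrease $r$, and the resulting vector cannot vanish because $\rho(\lambda,p)\le p/2$ (take $h_{2}=1$). Hence $\mathbf{h}^{*}$ appears in the defining sum for $\sigma$ and contributes $1/\rho$.

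Your reduction of the upper bound to $\sigma(\lambda,p)\le 4\sum_{h=1}^{p-1}1/(h\,\beta(h))$ is also correct, as is the observation $h\,\beta(h)\ge\rho(\lambda,p)$ for every $h$. What remains is only a sketch: you reduce to the counting bound $F(R)\ll 1+R\log R/\rho$ and then point to two possible tools (the successive minima of $L$, or the continued-fraction expansion of $\lambda/p$) without carrying either through. Both routes are standard and do work; the continued-fraction argument is essentially how Niederreiter proceeds in the cited reference, by showing that the large contributions to $\sum_{h}1/(h\,\beta(h))$ are governed by the $O(\log p)$ convergents $p_{k}/q_{k}$ of $\lambda/p$, each contributing $O(1/\rho)$. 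So your plan is sound and aligned with the original source, but as written it stops one step short of a proof: the lattice-point estimate $F(R)\ll 1+R\log R/\rho$ is asserted with a heuristic, not established. If you want a self-contained argument, the cleanest completion is to invoke the second successive minimum $\rho'$ of $L$ (so that $\rho\rho'\asymp p$), cover the hyperbolic region $\{xy\le R\}$ by $O(\log R)$ dyadic boxes, and use that each box of sides $A\times B$ contains $O(1+A/\rho+B/\rho+AB/p)$ points of $L$.
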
 

In some cases which are of interest for our problem, we can control from below $\rho(\lambda,p)$:

\begin{lemma}\label{minvalue} 
Let $\lambda$ be an element order $k \geq 3$ in the multiplicative group $\mathbb{F}_{p}^*$.
Then 
$$\rho(\lambda,p) \geq p^{1/\phi(k)}/\sqrt{8},$$
where $\phi$ denotes as usual the Euler's totient function.
\end{lemma}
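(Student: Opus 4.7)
The plan is to control the nontrivial lattice points $(h_1,h_2)\in\mathbb Z^2$ solving $h_1+h_2\lambda\equiv 0\pmod p$ by comparing the archimedean size of $h_1+h_2\zeta$ (for $\zeta$ a primitive complex $k$-th root of unity) to the $p$-divisibility of its norm over $\mathbb Q$. Fix such a nontrivial $(h_1,h_2)$. If $h_1=0$ or $h_2=0$ then, since $\gcd(\lambda,p)=1$, the other coordinate must be divisible by $p$, so $r(\mathbf h)\geq p\geq p^{1/\phi(k)}/\sqrt 8$ holds trivially. Hence I only need to treat the case $h_1 h_2\neq 0$.

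In that case, set $\zeta=e^{2\pi i/k}\in\mathbb C$ and
\[ N:=\prod_{\substack{1\leq j\leq k\\ \gcd(j,k)=1}}(h_1+h_2\zeta^j)=\pm\,\mathrm{Res}\bigl(h_1+h_2X,\,\Phi_k(X)\bigr)\in\mathbb Z, \]
where $\Phi_k$ is the $k$-th cyclotomic polynomial. Two facts are crucial: first, $N\neq 0$, because for $k\geq 3$ the polynomial $\Phi_k$ is irreducible over $\mathbb Q$ of degree $\phi(k)\geq 2$, so has no rational root and no factor $h_1+h_2\zeta^j$ vanishes. Second, $p\mid N$: indeed $\lambda$ is a common root modulo $p$ of the two polynomials $h_1+h_2X$ and $\Phi_k(X)$ (the latter since $\lambda$ has order $k$ in $\mathbb F_p^*$), and the resultant of two polynomials sharing a root modulo $p$ is divisible by $p$. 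Hence $|N|\geq p$.

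The upper bound on $|N|$ comes from the factorwise triangle inequality $|h_1+h_2\zeta^j|\leq|h_1|+|h_2|$, yielding $|N|\leq(|h_1|+|h_2|)^{\phi(k)}$. Combined with the lower bound this gives $|h_1|+|h_2|\geq p^{1/\phi(k)}$, and in particular $\max(|h_1|,|h_2|)\geq p^{1/\phi(k)}/2$. Since $h_1$ and $h_2$ are both nonzero integers, $\min(|h_1|,|h_2|)\geq 1$, hence
\[ r(\mathbf h)=|h_1|\,|h_2|\geq p^{1/\phi(k)}/2\geq p^{1/\phi(k)}/\sqrt 8, \]
as required. The only point deserving careful justification is the nonvanishing of $N$, which reduces to the standard irreducibility of $\Phi_k$ for $k\geq 3$; the remainder is an elementary size comparison that in fact gives a slightly better constant than $1/\sqrt 8$.
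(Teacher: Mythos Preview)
Your proof is correct and follows the same core strategy as the paper: both exploit that $\lambda$ is a common root modulo $p$ of $h_1+h_2X$ and $\Phi_k(X)$, so the resultant (equivalently, the norm $N_{\mathbb Q(\zeta_k)/\mathbb Q}(h_1+h_2\zeta_k)$) is a nonzero integer divisible by $p$, forcing $|N|\geq p$. The difference lies in the upper bound for $|N|$. The paper bounds the resultant via Hadamard's inequality on the Sylvester matrix together with the estimate $\|\Phi_k\|_2\leq 2^{\phi(k)}$, which produces the constant $1/\sqrt{8}$. You instead bound each factor directly by the triangle inequality $|h_1+h_2\zeta^j|\leq |h_1|+|h_2|$, which is more elementary, avoids any estimate on the coefficients of $\Phi_k$, and yields the sharper constant $1/2$. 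You also dispose of the degenerate cases $h_1=0$ or $h_2=0$ explicitly, which the paper leaves implicit. In short: same idea, with a cleaner and slightly stronger execution on your side.
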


\begin{proof} 
Let 
$$\Phi_k(X)
=\sum_{0\leq l\leq\phi (k)} a_lX^l
=\prod_{1\leq l\leq k\atop\gcd (k,l)=1} (X-\zeta_k^l)$$ 
denote the $k$-th cyclotomic polynomial. 
Set 
$$\Vert\Phi_k(X)\Vert_2
=\left (\sum_{0\leq l\leq\phi (k)}\vert a_l\vert^2\right )^{1/2}
=\left (\frac{1}{2\pi}\int_0^{2\pi}\vert\Phi_k(e^{it})\vert^2dt\right )^{1/2}
\leq 2^{\phi (k)}.$$ 
We clearly have $\Phi_k(\lambda)= 0 \bmod p$. 
For $\mathbf{h}=(h_1,h_2) \neq 0$ we define 
$P(X)=h_1 + h_2X.$ 
Assume that $P(\lambda)=0 \bmod p$, then $p$ divides the resultant $R={\rm Res}(P,\Phi_k)$. 
The polynomial $\Phi_k$ being irreducible of degree $\geq 2$, 
we deduce that $R\neq 0$. 
It follows that $\vert R\vert \geq p$. 
Since $R$ is the determinant of the Sylvester matrix of $P(X)$ and $\Phi_k(X)$, 
by Hadamard's inequality we have
$$ \vert R\vert\leq\Vert P(X)\Vert_2^{\deg\Phi_k(X)}\Vert\Phi_k(X)\Vert_2^{\deg P(X)}
\leq\left(h_1^2+h_2^2\right)^{\phi(k)/2}2^{\phi (k)}
\leq\left(\max(\vert h_1\vert,\vert h_2\vert)\right)^{\phi(k)}8^{\phi (k)/2} .$$ 
Hence we have
$$r(\mathbf{h})
\geq\max(\vert h_1\vert,\vert h_2\vert)
\geq \vert R\vert^{1/\phi(k)}/\sqrt{8} \geq p^{1/\phi(k)}/\sqrt{8}.$$ 
All together we obtain the lower bound 
$\rho(\lambda,p) 
\gg p^{1/\phi(k)}/\sqrt{8}.$
\end{proof}

\subsubsection{Reduction to a problem of equidistribution}
Set $\Ha=\ker(\chi)$, the subgroup of $\mathbb{F}_{p}^*$ of order $d$. 
We interpret the condition $\chi(n_1)=\chi(n_2)$ as $n_1n_2^{-1} \in \Ha$. 
We write $\Ha$ as a disjoint union 
$$\Ha=\bigcup_{k\mid d}\Ha_k, 
\hbox{ where }\Ha_k
:=\{\theta \in \Ha, ord(\theta)=k\}.$$

\begin{proposition}\label{reduction}
For any pair $(x,y)$ of $I_{2}$ we define 
$$f_d(x,y)
= \frac{x}{d-1}+\min(x,y).$$ 
We have the following relation
$${\cal A}(p,d)
=\frac{1}{p-1} \sum_{1\leq n_1,n_2 \leq p-1 \atop \chi(n_1)=\chi(n_2)} \min(n_1,n_2) 
=\frac{p}{(p-1)}
\sum_{k\mid d \atop k\neq 1}\sum_{\theta \in \Ha_k} \sum_{ x \bmod p}f_d\left(\frac{x}{p},\frac{x \theta}{p}\right).$$
\end{proposition}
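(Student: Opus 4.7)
\emph{Plan.} The proof is entirely combinatorial, consisting of three steps: (i) swap the order of summation in (\ref{defApchi}); (ii) convert the resulting $\max$ into a $\min$; (iii) parametrize pairs with $\chi(n_1)=\chi(n_2)$ by the multiplicative action of $\Ha=\ker(\chi)$ and absorb the diagonal $\theta=1$ contribution into the non-identity terms using the auxiliary $\frac{x}{d-1}$ hidden inside $f_d$.

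For steps (i) and (ii), observe that $1\leq n_1,n_2\leq N$ is equivalent to $\max(n_1,n_2)\leq N$, so exchanging the outer and inner sums in (\ref{defApchi}) gives
\[
{\cal A}(p,d)=\frac{1}{p-1}\sum_{1\leq n_1,n_2\leq p-1 \atop \chi(n_1)=\chi(n_2)}\bigl(p-\max(n_1,n_2)\bigr).
\]
For each $n_1\in\{1,\ldots,p-1\}$ the set of partners $n_2\in\{1,\ldots,p-1\}$ with $\chi(n_2)=\chi(n_1)$ is precisely the coset $n_1\Ha$ of size $d$, so
\[
\sum n_1=\sum n_2=\frac{dp(p-1)}{2}\quad\text{and}\quad \#\{\text{pairs}\}=d(p-1).
\]
Hence $\sum(p-n_1-n_2)=0$, which combined with $\max+\min=n_1+n_2$ yields the first equality of the proposition.

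For step (iii), the pairs are in bijection with $\{1,\ldots,p-1\}\times\Ha$ via $(x,\theta)\mapsto(x,\,x\theta\bmod p)$. Writing $n_2/p=\{x\theta/p\}$ and pulling a factor $p$ out of the $\min$,
\[
{\cal A}(p,d)=\frac{p}{p-1}\sum_{\theta\in\Ha}\sum_{x=1}^{p-1}\min\!\left(\frac{x}{p},\left\{\frac{x\theta}{p}\right\}\right).
\]
The diagonal $\theta=1$ contributes $\frac{p}{p-1}\cdot\frac{p-1}{2}=\frac{p}{2}$. The key identity is
\[
\frac{p}{p-1}\sum_{\theta\in\Ha\setminus\{1\}}\sum_{x\bmod p}\frac{x/p}{d-1}=\frac{p}{p-1}\cdot(d-1)\cdot\frac{p-1}{2(d-1)}=\frac{p}{2},
\]
so the $\theta=1$ mass is exactly recovered by inserting $\frac{x/p}{d-1}$ into the summand indexed by the $d-1$ non-identity elements of $\Ha$. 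Decomposing $\Ha\setminus\{1\}=\bigcup_{k\mid d,\,k\neq 1}\Ha_k$ then produces the displayed formula, provided the second argument of $f_d$ is interpreted modulo $1$.

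There is no real obstacle here; the only thing worth flagging is the role of the auxiliary term $\frac{x}{d-1}$ in $f_d$. It carries no arithmetic content and is introduced precisely to redistribute the constant $\theta=1$ contribution uniformly over the remaining $d-1$ cosets. This produces a single clean sum indexed by $\Ha\setminus\{1\}$ of a fixed bounded-variation function evaluated at the orbit $\{(x/p,\{x\theta/p\})\}_x$, which is the right format for the subsequent application of the Koksma--Hlawka inequality combined with equidistribution bounds on the orbit.
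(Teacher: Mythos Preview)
Your proof is correct and follows essentially the same route as the paper's. The only notable variation is in step (ii): the paper converts $p-\max(n_1,n_2)$ to $\min(n_1,n_2)$ via the involution $(n_1,n_2)\mapsto (p-n_1,p-n_2)$ (which preserves the condition $\chi(n_1)=\chi(n_2)$), whereas you reach the same conclusion through the identity $\max+\min=n_1+n_2$ together with the coset count showing $\sum(p-n_1-n_2)=0$; step (iii) is organized identically in both arguments.
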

 
\begin{proof}
Changing the order of summation in (\ref{defApchi}) 
and making the change of variables $(n_1,n_2)\mapsto( p-n_1,p-n_2)$, 
we do have 
$$(p-1){\cal A}(p,d) 
=\sum_{1\leq n_1,n_2\leq p-1\atop\chi (n_1)=\chi(n_2)}\left (p-\max(n_1,n_2)\right )
=\sum_{1\leq n_1,n_2\leq p-1\atop\chi (n_1)=\chi(n_2)}\min(n_1,n_2).$$
Now we have 
$$\sum_{1\leq n_1,n_2 \leq p-1 \atop \chi(n_1)=\chi(n_2)} \min(n_1,n_2)
=\sum_{x\bmod p}\left(x+\sum_{\theta \in \Ha \atop \theta \neq 1} \min(x,\theta x)\right).$$
We remark that if $\theta \neq 1$, we have 
$$\min(x,\theta x) 
= pf_d\left(\frac{x}{p},\frac{x \theta}{p}\right)-\frac{x}{d-1}.$$
Using the decomposition $\Ha=\bigcup_{k\mid d}\Ha_k$ and summing over $\Ha$, the proposition follows. 
\end{proof}
 
\begin{remark}
The reader might wonder why we did not express directly the sum ${\cal A}(p,d)$ using the more natural function on $I_d$ 
given by $g(x_1,\dots,x_d)=\sum_{i=1}^{d} \min(x_1,x_i)$ 
evaluated at the points $\left(\frac{x}{p},\frac{x\lambda}{p},\dots,\frac{x\lambda^{d-1}}{p}\right)$,
where $\lambda$ generates $\Ha$. 
This comes from the fact that these points are not equidistributed in $I_d$ 
because they lie in the hyperplane of equation $x_1+\dots+x_d=0$. 
\end{remark}

\subsubsection{Proof of Theorem \ref{asympdpetit}}
We introduce the set of points in $I_{2}$: 
$$S_{\theta}=\left\{\left(\frac{x}{p},\frac{x \theta}{p}\right), x \bmod p\right\}$$ 
for any $\theta \in \Ha \backslash \{1\}$. 
By Theorem \ref{koksma} we have for any $\theta$
$$\left\vert
\frac{1}{p} \sum_{ x \bmod p} f_d\left(\frac{x}{p},\frac{x \theta}{p}\right) 
-\int_{I_{2}} f_d(u,v)dudv
\right\vert 
\leq V(f_d)D(S_{\theta}).$$
It is easy to compute the integral and obtain 
$$\int_{I_{2}} f_d(u,v)dudv 
=\frac{1}{2(d-1)}+\frac{1}{3}.$$ 
Applying Proposition \ref{reduction} and simplifying, we obtain the equation
\begin{equation}\label{errorterm} 
{\cal A}(p,d)
=\frac{2d+1}{6}p+ O\left(ET \right)
\end{equation}
where the error term is 
\begin{equation}\label{error}
ET
:=pV(f_d)\left(\sum_{k\mid d \atop k\neq 1}\sum_{\theta \in \Ha_k}D(S_{\theta})\right).
\end{equation} 
The readers can easily convince themselves that $V(f_d) \ll 1$ independently of $d$ 
(for instance look at the variation of $f_d$ over the rectangle $R:=\left[x_1,x_2\right] \times \left[x_2,y_2\right]$, 
namely $v_R(f):=f_d(x_2,y_2)-f_d(x_1,y_2)-f_d(x_2,y_1)+f_d(x_1,y_1)$. 
The Vitali variation can then be obtained by summing $v_R(f)$ over a partition of $I_2$ 
and taking the supremum over all possible partitions) 
\footnote{The Hardy-Krause variation is then obtained as a sum of the Vitali variations of $f_d$, $f_d(x,1)$ and $f_d(1,y)$.}.
Hence to finish the proof, we need to bound the sum of discrepancies. 
Applying Theorem \ref{erdosturan} with $H=p-1$ we obtain
$$D(S_{\theta}) 
\leq\left( \frac{3}{2}\right)^{2} 
\left(
\frac{2}{p} 
+\sum_{0<\left\|\mathbf{h}\right\|_{\infty}\leq p-1} 
\frac{1}{r(\mathbf{h})}
\left\vert \frac{1}{p}\sum_{x=1}^{p}e\left(\frac{h_1x+h_2 x\theta}{p}\right)\right\vert
\right).$$ 
Using the orthogonality relations
$$\sum_{b \bmod p} e(bn/p) 
= \left\{\begin{array}{ll}
p,&\quad\text{if $n\equiv 0 \pmod p$,}\\
0,&\quad\text{if $n\not\equiv 0 \pmod p$,}
\end{array}
\right.$$
we can bound the sum over $\mathbf{h}$ by
$$\sigma(\theta,p)
:=\sum_{0<\left\|\mathbf{h}\right\|_{\infty}\leq p-1} \frac{\delta_p(h_1+h_2 \theta)}{r(\mathbf{h})}$$ 
using the notations of subsection \ref{pseudo}. 
For $\theta \in \Ha_k$, we apply consecutively Lemma \ref{niedpseudo} and Lemma \ref{minvalue} to obtain
$$\sigma(\theta,p)
\leq C(\log p)^{2}/p^{1/\phi(k)}$$ 
for an absolute constant $C$. 
Hence recalling that $\gamma(d)=\max_{k \mid d}\phi(k)$ and summing over $k$, 
we arrive at
$$ET
=pV(f_d)\left(\sum_{k\mid d \atop k\neq 1}\sum_{\theta \in \Ha_k}D(S_{\theta})\right)
\ll d(\log p)^{2}p^{1-1/\gamma(d)}.$$ 
This concludes the proof of Theorem \ref{asympdpetit}, in view of Equation \eqref{errorterm}.

\subsection{Asymptotic for large $d$'s}
For a given non-principal Dirichlet character $\chi \mod{p}$, where $p$ is a prime, let
$$M(\chi) 
:= \max_{1 \le x \le p} \left| \sum_{n \le x} \chi(n) \right|$$
 and its renormalization
$$m(\chi) 
= \frac{M(\chi)}{e^\gamma\sqrt{p}/\pi}.$$
The P\'{o}lya--Vinogradov Theorem states that
\begin{equation}\label{Polya}
m(\chi) \ll \log p \end{equation}
for all non-principal characters $\chi\mod{p}$. 
Apart from some improvements on the implicit constant, this remains the state-of-the-art for the general non-principal character. 
However, for most of the characters $M(\chi)$ is much smaller and we can study how often $M(\chi)$ is large. 
The best result in this direction was obtained in \cite{bober2018frequency}:

\begin{theorem}\label{frequency} 
Let $\eta=e^{-\gamma}\log2$. If $1\le \tau\le \log_2  p-M$ for some $M\ge4$, then
$$ \Phi_p(\tau) 
:= \frac{1}{p-1} \# \left\{ \chi\mod p: m(\chi)> \tau \right\}
\leq \exp\left\{ - \frac{e^{\tau -2 - \eta }}{\tau} (1+O((\log\tau) /\tau))\right\}.$$
\end{theorem}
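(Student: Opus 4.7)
The plan is to adapt the Granville--Soundararajan framework for the value distribution of $L(1,\chi)$ to the slightly subtler statistic $M(\chi)$. First I would invoke the Pólya Fourier expansion
$$\frac{1}{\sqrt p}\sum_{n\le x}\chi(n)
=\frac{1}{2\pi i}\sum_{1\le |n|\le z}\frac{\bar\chi(n)\bigl(1-e(-nx/p)\bigr)}{n}
+O\!\left(\frac{\log p}{\sqrt p}+\frac{1}{z}\right),$$
so that, after grouping positive and negative frequencies and substituting $x=tp$, one obtains
$$\frac{\pi}{\sqrt p}\,M(\chi)
=\sup_{t\in(0,1)}\bigl|F(t,\bar\chi)\bigr|+o(1),\qquad
F(t,\psi):=\sum_{n\ge 1}\frac{\psi(n)\sin(2\pi nt)}{\pi n}.$$
A short discretisation reduces this supremum to one over $\asymp p$ sample values of $t$, and a truncation argument replaces $F$ by its partial sum $F_N$ with $N=p^{o(1)}$ at negligible cost.

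Next I would compare the distribution of $F_N(t,\bar\chi)$, as $\chi$ varies over characters mod~$p$, with the random model in which $\bar\chi$ is replaced by a random completely multiplicative function $X$ whose values $X(q)$ at primes $q\le N$ are independent uniform on the unit circle. Orthogonality of characters lets us match the moments of $F_N(t,\bar\chi)$ with those of $F_N(t,X)$ up to order $p^{o(1)}$, and a union bound over the sample $t$ is absorbed by the $1/\tau$ factor in the tail. This gives
$$\Phi_p(\tau)=(1+o(1))\,\mathbb P\!\left(\sup_{t}|F_N(t,X)|>e^{\gamma}\tau/\pi\right).$$

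Since $F_N(t,X)$ essentially factors as an Euler product with local factors involving $(1-X(q)e(qt)/q)^{-1}$, a saddle-point analysis of the Laplace transform $\mathbb E\exp(\lambda\log|L(1,X)|)$, identical to the one used by Granville--Soundararajan for $L(1,X)$ itself, yields for each fixed $t$
$$\mathbb P\bigl(|F_N(t,X)|>e^{\gamma}\tau/\pi\bigr)
=\exp\!\left(-\frac{e^{\tau-C_0}}{\tau}\bigl(1+O((\log\tau)/\tau)\bigr)\right).$$
Taking the supremum in $t$ effectively doubles the mass in the tail because, at the saddle point, the two real peaks near $t$ and $1-t$ contribute multiplicatively; this produces an additional $\log 2$ inside the exponential and, together with $C_0$ being absorbed into the normalisation $e^\gamma$ of $m(\chi)$, yields exactly the constant $\eta=e^{-\gamma}\log 2$ in the theorem.

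The hard part will be pinning down the $\log 2$ rigorously: one must show that at the saddle point the supremum over $t$ of the random Dirichlet series behaves like \emph{twice} a typical large value, rather than like a single peak (no factor) or like a full $p$-fold maximum (large combinatorial loss). This requires a second-moment analysis of the joint distribution of $F_N(t,X)$ and $F_N(t',X)$ together with a carefully tilted union bound, to show that the set of near-maximising $t$ is effectively a pair of symmetric points. Once this is in place, the $(\log\tau)/\tau$ relative error and the $1/\tau$ normalisation are routine consequences of the saddle-point asymptotics for the moment generating function of $\log|L(1,X)|$.
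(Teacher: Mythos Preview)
The paper does not prove this theorem; it is quoted from \cite{bober2018frequency} and invoked as a black box in the proof of Theorem~\ref{asymptoticlarge}. There is therefore no argument in the present paper to which your proposal can be compared.

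As a sketch of the argument in \cite{bober2018frequency} itself, your outline has the right large-scale architecture (P\'olya's Fourier expansion, comparison with a random completely multiplicative model via moment matching, a large-deviation/saddle-point analysis in the spirit of Granville--Soundararajan). However, your explanation of the constant $\eta=e^{-\gamma}\log 2$ does not hold up. The symmetry you invoke only gives $|F(1-t,\psi)|=|F(t,\psi)|$, so the supremum over $t\in(0,1)$ equals the supremum over $(0,1/2)$; this is a trivial identity and does not ``double the tail mass'' or contribute any $\log 2$ in the exponent. In \cite{bober2018frequency} the constant arises from a \emph{structure theorem}: characters with large $m(\chi)$ must be pretentious to a fixed odd primitive character $\xi$ of bounded conductor, after which $M(\chi)$ is controlled by $|L(1,\chi\bar\xi)|$ times an explicit arithmetic factor depending on $\xi$; the optimisation over the admissible $\xi$ is what produces $\eta$. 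Your proposed ``tilted union bound over $t$ plus second-moment analysis of near-maximising pairs'' is not the mechanism used there, and as described would not pin down the constant. So while the global strategy is reasonable, the part you flag as ``the hard part'' is misidentified, and the heuristic you offer for it is incorrect.
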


We are now in a position to prove Theorem \ref{conjecture} for large $d$'s.

\noindent\frame{\vbox{
\begin{theorem}\label{asymptoticlarge}
 Assume that $d\rightarrow +\infty$ and $\log d= o(\log p/\log_2 p)$. Then we have the following asymptotic formula
$${\cal A}(p,d)= dp/3 + O\left(p (\log_2 d)^2 \right).$$  For any $d$ (in particular for larger $d$'s), the following holds
 $${\cal A}(p,d)= dp/3 + O\left(p (\log p)^2 \right).$$
\end{theorem}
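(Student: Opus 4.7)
The plan is to reduce the asymptotic for ${\cal A}(p,d)$ to an upper bound on $M(p,m)$ via Elma's identity \eqref{MA}. Rearranging that identity yields
$${\cal A}(p,d) \;=\; \frac{dp}{3} \;+\; \frac{p}{12} \;+\; \frac{d+1}{12} \;+\; \frac{p^2}{2\pi^2(p-1)}\, M(p,m),$$
so the first assertion will follow from $M(p,m)\ll (\log_2 d)^2$ under the hypothesis $\log d = o(\log p/\log_2 p)$, and the unconditional assertion from $M(p,m)\ll (\log p)^2$. The unconditional bound is immediate from the Pólya--Vinogradov inequality applied to each character $\chi^j$, so the real content is the first.

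The bridge from $L$-values to the renormalised character-sum maxima $m(\chi)$ appearing in Theorem \ref{frequency} is the pointwise inequality $|L(1,\chi)|^2 \leq e^{2\gamma}\, m(\chi)^2$ for every odd primitive Dirichlet character $\chi$ mod $p$: indeed Theorem \ref{thElma} gives $|L(1,\chi)|^2 \leq (\pi^2(p-1)/p^2)\, M(\chi)^2$, and inserting $M(\chi) = (e^\gamma\sqrt p/\pi)\, m(\chi)$ produces the claim. Hence
$$\frac{m}{2}\, M(p,m) \;=\; \sum_{\substack{1\le j\le m-1\\ j\text{ odd}}}|L(1,\chi^j)|^2 \;\ll\; \sum_{\substack{1\le j\le m-1\\ j\text{ odd}}} m(\chi^j)^2,$$
and the target is $\sum_{j\text{ odd}} m(\chi^j)^2 \ll m (\log_2 d)^2$. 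I expand the sum by the layer-cake formula $\sum_{j\text{ odd}} m(\chi^j)^2 = 2\int_0^\infty \tau\, N(\tau)\, d\tau$ with $N(\tau) := \#\{j\text{ odd}: m(\chi^j) > \tau\}$, and use the pointwise bound $N(\tau) \leq \min(m/2,\,(p-1)\Phi_p(\tau))$.

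The two bounds on $N(\tau)$ cross at the threshold $\tau_1$ obtained from $(p-1)\Phi_p(\tau_1) \asymp m/2$; inverting Theorem \ref{frequency} forces $e^{\tau_1}/\tau_1 \asymp \log(p/m) = \log d$, so $\tau_1 = \log_2 d + O(\log_3 d)$. On $[0,\tau_1]$ the trivial estimate $N(\tau)\leq m/2$ contributes $\ll m\tau_1^2 \ll m(\log_2 d)^2$. On $[\tau_1,\log_2 p - 4]$ the change of variables $\tau = \tau_1 + y$ converts Theorem \ref{frequency} into $\Phi_p(\tau)\leq d^{-e^y}$, and the elementary inequality $e^y \geq 1+y$ bounds this contribution by $p\tau_1/(d\log d) \ll m$. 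On the residual range $[\log_2 p - 4,\, C\log p]$ (the upper cutoff is forced by Pólya--Vinogradov, beyond which $N\equiv 0$) monotonicity of $N$ together with the single-point evaluation $(p-1)\Phi_p(\log_2 p - 4) \ll p\exp(-c_1 \log p/\log_2 p)$ gives a contribution of order $p(\log p)^2\exp(-c_1\log p/\log_2 p)$.

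The main obstacle is precisely this residual range: outside the window of validity of Theorem \ref{frequency} only the sub-polynomial decay factor $\exp(-c_1\log p/\log_2 p)$ survives, and a short computation shows this contribution is $o(m(\log_2 d)^2)$ if and only if $\log d = o(\log p/\log_2 p)$, explaining the appearance of the hypothesis. Adding the three contributions gives $\sum_{j\text{ odd}} m(\chi^j)^2 \ll m (\log_2 d)^2$, hence $M(p,m)\ll (\log_2 d)^2$ and the first asymptotic. The second, unconditional statement then follows by applying the crude input $m(\chi^j) \ll \log p$ uniformly over the $m/2$ characters, producing $M(p,m) \ll (\log p)^2$ and thus ${\cal A}(p,d) = dp/3 + O(p(\log p)^2)$.
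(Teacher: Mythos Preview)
Your argument is correct and rests on the same engine as the paper's proof: a level-set decomposition of the characters according to the size of $m(\chi)$, with Theorem~\ref{frequency} controlling the tails and P\'olya--Vinogradov handling the residual range beyond $\log_2 p - M$, which is also where the hypothesis $\log d = o(\log p/\log_2 p)$ enters in both arguments. The packaging differs in two places. First, you route through \eqref{MA} and the pointwise bound $|L(1,\chi)|^2 \le e^{2\gamma}m(\chi)^2$ (obtained from Theorem~\ref{thElma}), whereas the paper expands ${\cal A}(p,d)$ directly by orthogonality and bounds $R=\sum_N\sum_{\Psi}|S(N,\Psi)|^2$ using $|S(N,\Psi)|\le M(\Psi)$; these are equivalent, since Theorem~\ref{thElma} is precisely the identity linking $\sum_k|S(k,\chi)|^2$ to $|L(1,\chi)|^2$. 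Second, you use a continuous layer-cake while the paper uses a dyadic partition $\cX_{p,j}^\tau$; again cosmetic. One small slip: the claim ``$\Phi_p(\tau_1+y)\le d^{-e^y}$'' is not literally what Theorem~\ref{frequency} gives, because the denominator $\tau_1+y$ grows with $y$; what one actually has is $\Phi_p(\tau_1+y)\le d^{-c\,e^{y/2}}$ for some $c>0$ (use that $(\log(e^\tau/\tau))'\ge 1/2$ for $\tau\ge 2$), but this still yields $\int_{\tau_1}^{\log_2 p-4}\tau(p-1)\Phi_p(\tau)\,d\tau\ll p\tau_1/(d\log d)\ll m$ and your conclusion is unaffected.
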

}}

\begin{remark} The condition $\log d= o(\log p/\log_2 p)$ could be made more explicit by specifying constants in the proof below.
Notice also that whereas Theorem \ref{conjecture} follows from Theorems  \ref{asympdpetit} and \ref{asymptoticlarge}, it does not follow from Theorem \ref{asympdpetit} and \eqref{Elmaremark}.
\end{remark}

\begin{proof} The second part of the Theorem follows directly from \eqref{MA} and the inequality $\vert L(1,\chi) \ll \log p$. This could also be proved following our argument below and using only P\'{o}lya--Vinogradov inequality. Let us now focus on the case $\log d= o(\log p/\log_2 p)$.
The condition $\chi(n_1)=\chi(n_2)$ is equivalent to $n_1n_2^{-1}$ lying in the kernel of $\chi$, which is a subgroup of order $d$ of the multiplicative cyclic group $\mathbb{F}_{p}^*$. We apply the
 orthogonality of characters in the subgroup $<\chi >$ of order $m$ generated by $\chi\in \cX_p$ and
 rewrite the sum ${\cal A}(p,d)$ defined in (\ref{defApchi}) as
$${\cal A}(p,d)
=\frac{1}{(p-1)}\sum_{N=1}^{p-1} \frac{1}{m}\sum_{\Psi \in \cX_p \atop \Psi^m= \chi_0}  \sum_{1 \leq n_1,n_2 \leq N} \Psi(n_1n_2^{-1}).$$
Separating the contribution of the trivial character from the others, this leads us to the equation
$${\cal A}(p,d)
= \frac{d}{(p-1)^2}\sum_{N=1}^{p-1} N^2 
+ \frac{1}{(p-1)}
\sum_{N=1}^{p-1}\frac{1}{m}\sum_{\Psi \in \cX_p^* \atop \Psi^m= \chi_0} \left\vert \sum_{1 \leq n \leq N}\Psi(n)\right\vert^2.$$ 
We have trivially
$$\frac{d}{(p-1)^2}\sum_{N=1}^{p-1} N^2 
=\frac{dp}{3}
+\frac{dp}{6(p-1)}
= dp/3 + O(d).$$ 
Therefore we are left to bound the contribution of non-trivial characters and 
\begin{equation}\label{erreur} {\cal A}(p,d)
=dp/3 + O\left(d\frac{R}{(p-1)^2}\right)\end{equation}
where 
$$R
:=\sum_{N=1}^{p-1}\sum_{\Psi \in \cX_p^* \atop \Psi^m= \chi_0} \left\vert \sum_{1 \leq n \leq N}\Psi(n)\right\vert^2. $$ Let us set the parameter $\tau=\min\{C(\log_2 d),\log_2 p-M\}$ where $M$ is the constant appearing in Theorem \ref{frequency} and $C$ is some large constant which will be specified later. We introduce the following set of characters $$\cX_{p,0}^{\tau}= \left\{ \Psi\in \cX_p^*:  m(\Psi)\leq \tau \right\}$$  and further define for every integer $1\leq j\leq J$

$$\cX_{p,j}^{\tau}:= \left\{ \Psi\in \cX_p^*: 2^{j-1}\tau < m(\Psi)\leq 2^j\tau \right\}$$ where $J$ is chosen in order to allow an application of Theorem \ref{frequency}. Precisely, we choose $J$ such that 
$$\tau 2^{J} \leq \log_2 p - M < \tau 2^{J+1}.$$  We now split the characters appearing in the summation in $R$ as follows
$$\cX_p^* =\left(\bigcup_{j=0}^{J} \cX_{p,j}^{\tau}\right)\bigcup \left\{ \Psi\in \cX_p^*: m(\Psi)> 2^J \tau \right\}.$$ Notice that if $\tau=\log_2 p-M$ then $J=0$ and we only split the summation depending on whether $ m(\Psi)\leq \log_2 p - M$ or not. Remark that here are at most $m$ characters $\Psi \in \cX_{p,0}^{\tau}$ appearing in the sum. Hence, it follows from Theorem \ref{frequency} and the inequality \eqref{Polya} that
\begin{eqnarray}\label{splitting}
R & \ll & \sum_{N=1}^{p-1}\left(mp\tau^2 + p^2\sum_{j=1}^{J}\tau^2 2^{2j} \Phi_p(\tau 2^{j-1}) + p^2(\log p)^2 \Phi_p(\tau 2^J)\right) \nonumber \\ 
&\ll & \sum_{N=1}^{p-1}\left(mp\tau^2 + p^2\sum_{j=1}^{J}\tau^2 2^{2j} \exp\left\{-\frac{e^{ \tau 2^{j-1}}}{100\tau 2^{j}}\right\} + p^2(\log p)^ 2 \exp\left\{ -\frac{e^{\tau 2^J}}{100\tau 2^J}\right\}\right). \end{eqnarray} The summation over $j$ in the right hand side of \eqref{splitting} is clearly dominated by its first term. Thus we obtain after summing over $N$ and recalling our choice of $J$:
\begin{eqnarray}\label{erreurfinale}
R \ll \frac{p^3}{d}\tau^2 +p^3\tau^2 \exp\left\{-c_1\frac{e^{ \tau}}{\tau }\right\}+ p^3(\log p)^2 e^{-c_2 \log p/ \log \log p} 
\end{eqnarray} for some absolute constants $c_1,c_2 >0$. We insert \eqref{erreurfinale} in \eqref{erreur} and choose $C$ large enough in the definition of $\tau$ to ensure that the second and third term in the right hand side of \eqref{erreurfinale} have negligible contribution. This is indeed possible due to the restriction on the size of $d$ and concludes the proof.
\end{proof}

\section{Proof of Theorem \ref{mainth1}}

The first part of Theorem \ref{mainth1} follows from Theorem \ref{asympdpetit} and \eqref{MA}. The second part follows from Theorem \ref{asymptoticlarge} and \eqref{MA}.

\section{Concluding remarks}

  We solved Elma's question about the asymptotic behavior of the character sums ${\cal A}(p,d)$  regardless of the size of $d$. As already noticed above, for $d$ large, this is not precise enough to deduce an asymptotic formula for the mean-square value $M(p,m)$. To conclude, let us say that the upper bound \eqref{boundMpmlarge} could be obtained by working directly with $L(1,\chi)$ following our method of proof of Theorem \ref{asymptoticlarge}. This requires results about the distribution of $L(1,\chi)$  as the ones obtained by Granville and Soundararajan \cite{GS1,GS2} instead of Theorem \ref{frequency}.

\section*{Funding}
This work was supported (for M. M) by the Austrian Science Fund
(FWF) [P-33043].

 \section*{Acknowledgements}
The second author would like to thank Igor Shparlinski for sketching a refinement of our argument in the proof of Theorem \ref{asymptoticlarge} leading to a better result.

{\small
\bibliography{central}

}
\end{document}